\newcounter{parag}
\newtheorem{lem}{Lemma}[section]
\newtheorem{theorem}{Theorem}[section]
\newtheorem{prop}{Proposition}
\newtheorem{question}{Question}
\newtheorem{cor}{Corollary}
\newtheorem{rmk}{Remark}
\title{Axial view on pseudo-composition algebras and train algebras of rank 3}
\author{Ilya Gorshkov, Andrey Mamontov and Alexey Staroletov\footnote{
The first and second authors are supported by Russian Scientific Foundation (project \textnumero22-11-00081, https://rscf.ru/en/project/22-11-00081/)
and the third author is supported by RAS Fundamental Research Program (project  FWNF-2022-0002)}}
\date{\vspace{-30px}}
\begin{document}
\maketitle
\newcommand{\Addresses}{{% additional braces for segregating \footnotesize
		\bigskip\noindent
		\footnotesize
	    Ilya~Gorshkov, \textsc{Sobolev Institute of Mathematics, Novosibirsk, Russia;}\\\nopagebreak
        \textsc{Saint Petersburg State University, Saint Petersburg, Russia;}\\\nopagebreak
		\textit{E-mail address: }
        \texttt{ilygor8@gmail.com}

        \medskip\noindent
        Andrey~Mamontov, \textsc{Sobolev Institute of Mathematics, Novosibirsk, Russia;}\\\nopagebreak
		\textsc{Saint Petersburg State University, Saint Petersburg, Russia;}\\\nopagebreak
		\textit{E-mail address: } \texttt{andreysmamontov@gmail.com}

		\medskip\noindent
		Alexey~Staroletov, \textsc{Sobolev Institute of Mathematics, Novosibirsk, Russia;}\\\nopagebreak
		\textit{E-mail address: } \texttt{staroletov@math.nsc.ru}

		\medskip
}}
%\Addresses

\begin{abstract}
We show that pseudo-composition algebras and train algebras of rank~3 generated by idempotents are characterized as axial algebras with fusion laws derived from the Peirce decompositions of idempotents in these classes of algebras.

The corresponding axial algebras are called $\mathcal{PC}(\eta)$-axial algebras, where $\eta$ is an element of the ground field. As a first step towards their classification, we describe $2-$ and $3$-generated subalgebras of such algebras.
\end{abstract}

{\bf Keywords:} pseudo-composition algebra, train algebra, axial algebra, Peirce decomposition, idempotent

{\bf MSC classes:} 17A99, 17C27, 17D92

\section{Introduction}
A commutative algebra is said to be of rank 3 if every its element generates a subalgebra of dimension not greater than two.
This class of algebras includes many interesting examples, some general theory and motivation can be found in~\cite{W99}.
We are interested in two particular cases.

Suppose that $\mathbb{F}$ is a field of characteristic not 2 or 3.
A commutative $\mathbb{F}$-algebra $A$ endowed with
a non-zero symmetric bilinear form $\varphi$
is called a {\it pseudo-composition algebra} if $x^3=\varphi(x,x)x$ for all $x\in A$. These algebras have been actively studied in the past, in particular, Meyberg and Osborn obtained their classification, under some  restrictions, in~\cite{MO93}.
It was shown in~\cite{EO00,S59} that these algebras are closely related to Jordan algebras of generic degree $\leq 3$.
Moreover, it is known that
the bilinear form $\varphi$ is a Frobenius form, that is $(xy,z)=(x,yz)$ for all algebra elements $x$, $y$, and $z$~\cite{EO00}.

The second subclass are train algebras of rank~3.
Let $A$ be a commutative algebra over a field $\mathbb{F}$
with $\operatorname{char}\mathbb{F}\neq2,3$. The principal powers of $A$ are defined by $x^1=x$ and $x^i=x^{i-1}x$ for $i\geq2$. If there exists a non-zero algebra homomorphism $\omega:A\rightarrow\mathbb{F}$, then $A$ is called a {\it baric algebra} and $\omega$ is a {\it weight function}. In this case the pair $(A,\omega)$ is said to be a (principal) train algebra of rank~$r$, where $r$ is a positive integer, if there exist $\lambda_1,\ldots,\lambda_{r-1}\in\mathbb{F}$ such that every $x\in A$ satisfies the equality
$x^r+\lambda_1\omega(x)x^{r-1}+\ldots+\lambda_{r-1}\omega(x)^{r-1}x=0$.
These algebras were introduced by Etherington in 1939 as a part of the algebraic
formalism of genetics in his fundamental work~\cite{Eth39}.

Let us briefly discuss the concept of axial algebras.
A \emph{fusion law} over $\mathbb{F}$ is a pair $(\mathcal{F},\ast)$, where $\mathcal{F}$ is a subset of $\mathbb{F}$ and $\ast:\mathcal{F}\times\mathcal{F}\to 2^\mathcal{F}$ is a %symmetric
map to the set of all subsets of $\mathcal{F}$.
If $A$ is a commutative $\mathbb{F}$-algebra and $a\in A$, then $ad_a : A\to A$ stands for the adjoint map sending $u$ to $au$. For $\lambda\in\mathbb{F}$, denote $A_{\lambda}(a)=\{u\in A~|~au=\lambda u\}$ and for $L\subseteq\mathbb{F}$, denote $A_L(a):=\oplus_{\lambda\in L}A_{\lambda}(a)$.

An \emph{($\mathcal{F}$-)axial algebra} $A$ is a commutative algebra over $\mathbb{F}$ generated by a set of idempotents $X$, called \emph{axes}, such that for each $a \in X$ we have
$A=A_\mathcal{F}(a)$, and
$A_{\lambda}(a)A_\mu(a)\subseteq A_{\lambda\ast\mu}(a)$ for all
$\lambda,\mu\in\mathcal{F}$. An axis $a$ is said to be \emph{primitive} if $A_1(a)$ is $1$-dimensional; that is, $A_1(a)=\langle a\rangle$. An axial algebra $A$ is \emph{primitive} if $A$ is generated by a set of primitive axes.
These properties are reminiscent of the so-called Peirce decompositions in different classes of non-associative algebras. Nevertheless,
the main inspiration for the concept of axial algebras were the Griess algebra~\cite{Gr82} and Majorana theory~\cite{Iv09}. Axial algebras were introduced by Hall, Rehren, and Shpectorov in~\cite{HRS1,HRS2}. Current state-of-art of this topic can be found in a recent survey \cite{survey}.

In this paper we consider pseudo-composition algebras
and train algebras of rank~3 from the point of view of axial algebras. The two classes of algebras of rank 3, that we discussed above, are distinguished from others by the following property, which resembles axial behavior: if $a$ is an idempotent in $A$ and $ad_a$ is its adjoint operator, then eigenvalues of $ad_a$ lie in a fixed set of size 3 and eigenvectors obey a fusion law of size 3.

Finite fusion laws may be written in the form of tables.
There are three examples below in Table~\ref{t:1}.
Pseudo-composition algebras are described by $\mathcal{PC}(-1)$ \cite{Wal88}. The fusion law $\mathcal{J(\eta)}$ corresponds to axial algebras of Jordan type \cite{HRS2}, which include Matsuo algebras related to $3$-transposition groups ($\eta \not = 0,1)$ and Jordan algebras ($\eta=\frac{1}{2}$). The most
general case $\mathcal{J(\alpha,\beta)}$ is taken from \cite{Whybrow2}, where all possible 2-generated graded primitive axial algebras are listed.

\begin{table}[h]
\begin{center}
\renewcommand{\arraystretch}{1.4}
\begin{tabular}{|c||c|c|c|}
\hline
$\ast$&$1$&$\eta$&$\frac{1}{2}$\\
\hline\hline
$1$&$1$&$\eta$&$\frac{1}{2}$\\
\hline
$\eta$&$\eta$&$1$&$\frac{1}{2}$\\
\hline
$\frac{1}{2}$&$\frac{1}{2}$&$\frac{1}{2}$&$\eta,1$\\
\hline
\end{tabular} \hspace{5pt}
\begin{tabular}{|c||c|c|c|}
\hline
$\ast$&$1$&$0$&$\eta$\\
\hline\hline
$1$&$1$& &$\eta$\\
\hline
$0$&&$0$&$\eta$\\
\hline
$\eta$&$\eta$&$\eta$&$1,0$\\
\hline
\end{tabular} \hspace{5pt}
\begin{tabular}{|c||c|c|c|}
\hline
$\ast$&$1$&$\alpha$&$\beta$\\
\hline\hline
$1$&$1$&$\alpha$ &$\beta$\\
\hline
$\alpha$&$\alpha$&$1,\alpha$&$\beta$\\
\hline
$\beta$&$\beta$&$\beta$&$1,\alpha$\\
\hline
\end{tabular}
\end{center}
\caption{Fusion laws $\mathcal{PC}(\eta)$, $\mathcal{J(\eta)}$, and $\mathcal{J(\alpha,\beta)}$}\label{t:1}
\end{table}

It is noted \cite{Whybrow2} that the case $\beta=\frac{1}{2}$ is special in terms of multiplication formulas. Observe that
$\mathcal{J(\eta)}$ is a special case of $\mathcal{J}(0,\eta)$ where additionally $0*0=\{0\}$;
and there is a special subcase $\beta=\eta=\frac{1}{2}$ including Jordan algebras. Meanwhile
$\mathcal{PC}(\eta)$ is a special case of $\mathcal{J}(\eta,\frac{1}{2})$ where the product of $\eta$-eigenspaces is also further limited, but this time we exclude $\eta$, not~$1$. We show why the value $\eta=-1$
is special for $\mathcal{PC}(\eta)$ and prove that the fusion law characterizes pseudo-composition algebras in this case.

\begin{theorem}\label{th:1}
Suppose that $\mathbb{F}$ is a field of characteristic not $2$ or $3$. Suppose that $A$ is a primitive $\mathcal{PC}(\eta)$-axial algebra over $\mathbb{F}$, where $\eta\in\mathbb{F}$ and $\eta\not\in\{1, \frac{1}{2}\}$.
Then the following statements hold:
\begin{enumerate}[$(i)$]
    \item if $\eta=-1$, then $A$ is a pseudo-composition algebra;
    \item if $\eta\neq-1$, then $A$ is a train algebra of rank $3$.
\end{enumerate}
\end{theorem}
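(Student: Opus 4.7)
The plan is to fix a primitive axis $a\in X$ and exploit the Peirce decomposition $A=\langle a\rangle\oplus A_0(a)\oplus A_\eta(a)$. For an arbitrary element $x\in A$, I would write $x=\alpha a+x_0+x_\eta$ with $x_0\in A_0(a)$ and $x_\eta\in A_\eta(a)$. The target is to produce a universal cubic identity $x^3+\lambda_1\omega(x)x^2+\lambda_2\omega(x)^2 x=0$ with scalars $\lambda_1,\lambda_2\in\mathbb{F}$ depending only on $\eta$, where $\omega:A\to\mathbb{F}$ is a weight function whose natural candidate is the projection $x\mapsto\alpha$ onto $\langle a\rangle$. For the pseudo-composition case one will additionally want a symmetric bilinear form $\varphi$ built from $\omega$.

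Using the fusion law I would compute $x^2$ and $x^3$ componentwise. By primitivity of $a$ the $A_1(a)$-component of $x_\eta^2$ is a scalar multiple of $a$, so $x_\eta^2=\mu a+w_0$ with $\mu\in\mathbb{F}$ and $w_0\in A_0(a)$; all other products are routed to the appropriate Peirce component by the table $\mathcal{PC}(\eta)$. Collecting the three components of $x^3+\lambda_1\alpha x^2+\lambda_2\alpha^2 x$ yields three polynomial identities in $\alpha,x_0,x_\eta,\mu,w_0$ that must vanish for every $x$. Plugging in $x=a$ immediately gives $1+\lambda_1+\lambda_2=0$; plugging in $x=x_\eta$ (for which $\omega(x)=0$ since $\eta\neq 1$) forces in the train case that $x_\eta\cdot x_\eta^2=0$, while in the pseudo-composition case it forces $x_\eta\cdot x_\eta^2\in\langle x_\eta\rangle$.

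The principal obstacle is that the fusion law does not directly force the identities $x_\eta\cdot w_0\in\langle x_\eta\rangle$ or $x_0\cdot x_0^2\in\langle x_0\rangle$, which are exactly what is required for $x^3$ to lie in the span of $x^2$ and $x$. I would expect to obtain them by a careful analysis of the $2$-generated subalgebras $\langle a,x_\eta\rangle$ and $\langle a,x_0\rangle$, whose structure is exactly the subject announced at the end of the abstract. Once these identities are available, matching the $A_\eta(a)$- and $A_0(a)$-components of the cubic relation produces linear equations in $\lambda_1,\lambda_2$ (with coefficients that are polynomials in $\eta$) determining them uniquely.

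A direct inspection of the resulting expression for $\lambda_1$ should reveal a factor of $(1+\eta)$, so that $\lambda_1=0$ precisely when $\eta=-1$. In that case the universal identity collapses to $x^3=-\lambda_2\omega(x)^2 x$, and setting $\varphi(x,y)=-\lambda_2\omega(x)\omega(y)$ produces a nonzero symmetric bilinear form with $x^3=\varphi(x,x)x$, making $A$ a pseudo-composition algebra. For $\eta\neq-1$ one has $\lambda_1\neq 0$; after verifying that $\omega$ is a well-defined algebra homomorphism (independence of the choice of axis should follow from the primitivity of the axes together with the uniqueness of the cubic relation just derived), $A$ is a train algebra of rank $3$ with weight function $\omega$.
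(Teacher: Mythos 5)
Your plan---a universal cubic identity obtained by decomposing one generic element with respect to a single fixed axis---has gaps that I do not see how to close, and it diverges from what actually works. First, the Peirce decomposition for $\mathcal{PC}(\eta)$ is $A=\langle a\rangle\oplus A_{\eta}(a)\oplus A_{1/2}(a)$, not $\langle a\rangle\oplus A_{0}(a)\oplus A_{\eta}(a)$; you have written the eigenvalues of the Jordan-type law $\mathcal{J}(\eta)$. More seriously, the componentwise computation of $x^3$ for $x=\alpha a+x_\eta+x_{1/2}$ requires products such as $x_\eta\cdot\bigl(x_{1/2}^2\bigr)_\eta$ and $x_{1/2}\cdot(x_\eta x_{1/2})$, which the fusion law only confines to subspaces, and the auxiliary identities you hope to extract from $2$-generated subalgebras are not available: $x_\eta$ and $x_{1/2}$ are not axes, so $\langle\!\langle a,x_\eta\rangle\!\rangle$ is not a $2$-generated axial subalgebra and Proposition~\ref{p:2gen} says nothing about it. The paper avoids this entirely by proving a \emph{linearized} identity $a(xy)+x(ay)+y(ax)=\cdots$ (Lemma~\ref{l:3prod}) in which one slot is an axis and the other two are arbitrary; in that symmetric combination every undetermined product (e.g. $x_\eta y_\eta$, or the $A_1$-part of $x_{1/2}y_{1/2}$) collapses to a value of the Frobenius form. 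It then uses the fact that $A$ is spanned by primitive axes (Proposition~\ref{p:basis}) to replace the axis slot by an arbitrary element by linearity, and only at the very end sets $x=y=z$ and divides by $3$. Your outline never invokes the spanning property, which is the essential mechanism.

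Second, your construction of $\varphi$ in the case $\eta=-1$ is wrong. Setting $\varphi(x,y)=-\lambda_2\omega(x)\omega(y)$ makes $\varphi$ a rank-one form built from a weight function, i.e.\ it would make $A$ baric; but precisely when $\eta=-1$ there is no weight function: the projection coefficient onto $\langle a\rangle$ depends on the choice of axis $a$ (the paper's proof that $(a,b)=1$ for all pairs of primitive axes, which is what makes $w_a$ axis-independent and multiplicative, explicitly uses $1+\eta\neq0$ in Proposition~\ref{p:train}). The correct $\varphi$ is the Frobenius form of Proposition~\ref{p:form}, which is genuinely bilinear and not a product of linear functionals; constructing it and proving $(xy,z)=(x,yz)$ is a separate, necessary step that your outline omits. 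Relatedly, for $\eta\neq-1$ the axis-independence and multiplicativity of $\omega$ do not ``follow from primitivity''; they require the computation of $(\alpha a+b-2ab)^2$ carried out in Proposition~\ref{p:train}.
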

The converse statements are true under the following restrictions.
Suppose that $\mathbb{F}$ is an infinite field of characteristic not $2$ or $3$. Suppose that $A$ is a pseudo-composition algebra and $\eta=-1$ or a train algebra of rank~$3$ and $\eta\neq-1$. If $e$ is an idempotent in $A$, then
$A=A_1(e)\oplus A_{\eta}(e)\oplus A_{1/2}(e)$, where $A_1(e)$ is spanned by $e$, and products of eigenvectors from these subspaces obey the fusion law $\mathcal{PC}(\eta)$ \cite[Propositions~1.3 and 1.4]{W99}. Therefore, if $A$ is generated by idempotents, then $A$ is a $\mathcal{PC}(\eta)$-axial algebra.

Train algebras of rank 3 corresponding to $\eta=0$ in
Theorem~\ref{th:1} were investigated in~\cite{Wal88-2},
in particular, it was proved that these algebras are Jordan.
This implies that the class of primitive $\mathcal{PC}(0)$-axial algebras
is exactly the intersection of axial algebra classes corresponding to two
fusion laws in Table~\ref{t:1}: $\mathcal{PC}(0)$ and $\mathcal{J}(\frac{1}{2})$.

Note that until now, most of the papers on axial algebras have been devoted to two cases: axial algebras of Jordan type (fusion laws $\mathcal{J}(\eta)$) and Monster type
~\cite{survey}. In the proof of Theorem~\ref{th:1}, we use methods developed earlier for axial algebras of Jordan type and show how they can be applied to other classes of algebras. This gives us hope for a further expansion of axial algebras within the world of non-associative algebras.

In fact, to show that every $\mathcal{PC}(\eta)$-axial algebra is
a pseudo-composition algebra or train algebra of rank~$3$, we investigate subalgebras generated by three axes.
As a separate independent result, we describe such 3-generated algebras. Consider a primitive axis $x$ of an axial algebra $A$ over a field $\mathbb{F}$. If $y\in A$, then denote by $\varphi_x(y)$ the element of
$\mathbb{F}$ such that the projection of $y$ on $A_1(x)$ equals $\varphi_x(y)x$.
\begin{theorem}\label{th:2}
Suppose that $\mathbb{F}$ is a field of characteristic not two and $\eta\in\mathbb{F}\setminus\{\frac{1}{2}, 1\}$.
If $A$ is a $\mathcal{PC}(\eta)$-axial algebra over $\mathbb{F}$ generated by primitive axes $a$, $b$, and $c$, then $A$ is
the span of $a$, $b$, $c$, $ab$, $bc$, $ac$, $a(bc)$, and $b(ac)$: in particular $\dim A\leq8$. Furthermore, if $\alpha = \varphi_a(b)$,
$\beta =\varphi_b(c)$, $\gamma = \varphi_c(a)$, and $\psi=\varphi_a(bc)$, then
\begin{enumerate}[$(i)$]
\item $A_{\eta}(a)=\langle \alpha a+b-2ab, \gamma a+c-2ac, \psi a+bc-2a(bc)\rangle$;
\item $A_{1/2}(a)=\langle \alpha(\eta-1)a-\eta b+ab, \gamma(\eta-1)a-\eta c+ac, \psi(\eta-1)a-\eta bc+a(bc)$, \\
 $(2\eta^2-1)(2\psi-\beta)a-\eta\gamma b+(\eta-2\eta^2)\alpha c-bc+2b(ac)\rangle$.
\end{enumerate}
\end{theorem}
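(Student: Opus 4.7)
Let $V = \langle a, b, c, ab, ac, bc, a(bc), b(ac)\rangle$. The strategy is to prove that $V$ is a subalgebra of $A$; since $A$ is generated by $a, b, c \in V$, this forces $V = A$, after which the Peirce decomposition with respect to $a$ yields the explicit bases in $(i)$ and $(ii)$.

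The main tool is that $1$, $\eta$, $1/2$ are pairwise distinct under the assumption on $\eta$, so every $y \in A$ decomposes uniquely as $y = \varphi_a(y) a + y_\eta + y_{1/2}$ with $y_\eta \in A_\eta(a)$ and $y_{1/2} \in A_{1/2}(a)$. Pairing this identity with $ay = \varphi_a(y) a + \eta y_\eta + (1/2) y_{1/2}$ and solving the $2\times 2$ linear system gives
\[
(1-2\eta)\,y_\eta = \varphi_a(y)\, a + y - 2ay, \qquad (1/2-\eta)\,y_{1/2} = \varphi_a(y)(\eta-1)\,a - \eta y + ay.
\]
Substituting $y = b, c, bc$ produces three spanning elements for each of $A_\eta(a)$ and $A_{1/2}(a)$: these are precisely the first three listed in $(i)$ and $(ii)$. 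The fourth generator of $A_{1/2}(a)$ will emerge from $y = b(ac)$.

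The nontrivial content is closure of $V$ under multiplication. Products $a\cdot z$ for $z \in V$ lie in $\langle a, z, az\rangle$ by the formulas above, hence in $V$; by symmetry of the roles of the three axes, the cases $b\cdot z$ and $c\cdot z$ reduce, modulo products already handled, to one genuinely new element, namely the triple $c(ab)$, which is a priori independent of $a(bc)$ and $b(ac)$. The crux is the identity
\[
c(ab) + b(ac) - 2a(bc) \in \langle a, b, c, ab, ac, bc, a(bc)\rangle.
\]
To establish it, expand each triple product through the $a$-Peirce decomposition $b = \alpha a + b_\eta + b_{1/2}$, $c = \gamma a + c_\eta + c_{1/2}$, use the fusion law $\mathcal{PC}(\eta)$ to locate the bilinear pieces ($b_\eta c_\eta \in \langle a\rangle$; $b_\eta c_{1/2}, b_{1/2} c_\eta \in A_{1/2}(a)$; $b_{1/2} c_{1/2} \in \langle a\rangle \oplus A_\eta(a)$), and apply the idempotency relations from $b^2 = b$ and $c^2 = c$, which force $b_\eta b_{1/2} \in \langle b_{1/2}\rangle$ and pin down the $A_1$-components of $b_\eta^2, b_{1/2}^2$. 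The obstruction terms $b_\eta c_{1/2}$ and $b_{1/2} c_\eta$, which individually prevent expressing each triple product through the first seven basis vectors, enter the combination $c(ab)+b(ac)-2a(bc)$ with matching coefficients and cancel, while the $A_\eta$-contributions collect into $\langle b_\eta, c_\eta, (bc)_\eta\rangle$.

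The same computation shows that the $A_{1/2}$-component of $b(ac)$ furnishes the fourth spanning vector of $A_{1/2}(a)$: it separates the previously inseparable pair $b_\eta c_{1/2}$ and $b_{1/2} c_\eta$, whose sum alone appears in $(bc)_{1/2}$. Its displayed form, with the $a$-coefficient involving $\beta = \varphi_b(c)$, arises by comparing the $a$-Peirce decomposition of $bc$ with the $b$-Peirce decomposition of $c$; the parameter $\beta$ enters precisely because it encodes how much of $c$ lies along $b$, information not recoverable from the $a$-decomposition alone. The main obstacle is the bookkeeping across two Peirce decompositions and verifying that every product among the eight generators lands in $V$; once these are carried out, the spanning statements $(i)$, $(ii)$ and the bound $\dim A \le 8$ follow at once.
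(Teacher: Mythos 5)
Your opening moves match the paper's: the $2\times 2$ system giving $(1-2\eta)y_\eta=\varphi_a(y)a+y-2ay$ and $(\tfrac12-\eta)y_{1/2}=\varphi_a(y)(\eta-1)a-\eta y+ay$ is exactly how the paper produces the first three generators of each eigenspace, and your identity for $c(ab)+b(ac)-2a(bc)$ is equivalent to the paper's Lemma~\ref{l:3prod}/Corollary~\ref{c:3prod} (which states it as $a(bc)+b(ac)+c(ab)=(1+\eta)bc+\cdots$). One imprecision there: the cross terms $b_\eta c_{1/2}$ and $b_{1/2}c_\eta$ do not cancel in your combination --- each survives with coefficient $\eta-\tfrac12\neq 0$ --- and the term $(b_{1/2}c_{1/2})_\eta$ survives with coefficient $1-2\eta$; both must be absorbed via $(bc)_{1/2}$ and $(bc)_\eta$ respectively, which your remark about ``collecting into $\langle b_\eta,c_\eta,(bc)_\eta\rangle$'' suggests you realize, but ``cancel'' is the wrong word.

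The genuine gap is that your closure argument for $V$ covers only a small fraction of the required products. Your claim that ``$a\cdot z\in\langle a,z,az\rangle$, hence in $V$'' is circular unless $az$ is already known to lie in $V$: it handles $a\cdot ab$, $a\cdot a(bc)$, etc.\ (via $a(a w)=\tfrac{1-\eta}{2}\varphi_a(w)a-\tfrac{\eta}{2}w+\tfrac{1+2\eta}{2}aw$), but not $a\cdot b(ac)$, nor, by symmetry, $b\cdot a(bc)$, $c\cdot a(bc)$, $c\cdot b(ac)$. These four are \emph{not} reducible to the single new element $c(ab)$; the paper needs a separate device for them, splitting $a(b(ac))=\tfrac12 a\bigl(b(ac)+c(ab)\bigr)+\tfrac12 a\bigl(b(ac)-c(ab)\bigr)$ and showing that $b(ac)-c(ab)$ decomposes as $x_\eta+x_{1/2}$ with $x_\eta$ a known combination of $b_\eta,c_\eta$, so that $a$ acts computably on it. More seriously, you never address products of two non-axis elements --- $(ab)^2$, $(ab)(bc)$, $(ab)\cdot a(bc)$, $a(bc)\cdot b(ac)$, and so on --- which are most of the $36$ products needed to make $V$ a subalgebra. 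The paper handles these with the Frobenius form (Propositions~\ref{p:basis} and~\ref{p:form}) and the identity of Lemmas~\ref{l:axay1}/\ref{l:axay2} expressing $(ax)(ay)$ through lower-order products, followed by explicit (computer-checked) computation; nothing in your plan substitutes for this. Until those products are shown to land in $V$, the conclusion $\dim A\le 8$ does not follow, and the fourth generator of $A_{1/2}(a)$ in $(ii)$ --- whose verification requires precisely the product $b\cdot(ac)$-type formulas you omit --- is not justified either.
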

Note that if $\eta=-1$ then $\operatorname{char}\mathbb{F}\neq3$ since $\eta\neq\frac{1}{2}$. If $\eta\neq-1$ then the characteristic can be equal to $3$ in contrast to the assumptions of Theorem~\ref{th:1}.
We show in Tables~\ref{t:prod} and~\ref{t:prod2} how to multiply elements in the algebra from this theorem depending on $\eta$.
Moreover, we prove that if a basis for an 8-dimensional algebra has the table of product as in Tables~\ref{t:prod} or~\ref{t:prod2}, then  it is indeed a $\mathcal{PC}(\eta)$-axial algebra generated by three primitive axes.

Based on Theorem~\ref{th:2} and similarly to \cite[Problem~1(ii)]{hss2}, we formulate the following problem.

\begin{question}
Suppose that $A$ is a $\mathcal{PC}(\eta)$-axial $\mathbb{F}$-algebra generated by a finite set of primitive axes.
Is $A$ finite dimensional over $\mathbb{F}$?
\end{question}

This paper is organized as follows.
In Section~2, we described $\mathcal{PC}(\eta)$-axial algebra
generated by two primitive axes. This allows us to prove that
every $\mathcal{PC}(\eta)$-axial algebra admits a Frobenius form and is spanned by a set of axes. In Section~3, we prove Theorem~\ref{th:1}. Section~4 is devoted to the case of $\mathcal{PC}(\eta)$-axial algebra generated by three primitive axes and a proof of Theorem~\ref{th:2}.

\section{$\mathcal{PC}(\eta)$-axial algebras generated by two primitive axes}
In this section, we investigate 2-generated subalgebras of $\mathcal{PC}(\eta)$-axial algebras.
This allows us to prove that every
primitive $\mathcal{PC}(\eta)$-axial algebra admits a Frobenius form and is spanned by a set of axes. This proof follows the ideas outlined in \cite[Section 4]{hss}. Finally, we show that the fusion law for $\eta=-1$ is necessary stricter than it is in the definition.

Before moving on to the main part of the section, we want to emphasize that a description of 2-generated subalgebras
in a more general context of $J(\alpha,\beta)$-algebras
is available in \cite{Whybrow2}. Provided that $A$ is the linear span of its axes, the proof of the existence of a Frobenius form in \cite{hss} for axial algebras of Jordan type
also works for $J(\alpha,\beta)$-algebras.

Throughout, we suppose $\mathbb{F}$ is a field of characteristic not two and $\eta\in\mathbb{F}\setminus\{1,\frac{1}{2}\}$. Fix a $\mathcal{PC}(\eta)$-axial algebra $A$. Recall that if $a$ is a primitive axis, then $A=A_1(a)\oplus A_\frac{1}{2}(a)\oplus A_{\eta}(a)$, where $A_\lambda(a)=\{x\in A~|~ax=\lambda x\}$ and $A_1(a)$ is spanned by $a$. Moreover, for $\lambda,\mu\in\{1,\frac{1}{2},\eta\}$,
we have $A_\lambda(a)A_\mu(a)\subseteq A_{\lambda\ast\mu}(a)$, where the fusion law $\lambda\ast\mu$ is described in Table~\ref{t:1}. Note that
the fusion law $\mathcal{PC}(\eta)$ is $\mathbb{Z}_2$-graded. Indeed, if $a$ is an axis and
$A_+:=A_1(a)\oplus A_\eta(a)$ and $A_-:=A_\frac{1}{2}(a)$, then  $A = A_+\oplus A_-$, where $A_+A_+,A_-A_-\subseteq A_+$ and $A_+A_-\subseteq A_-$. This decomposition allows us to define
 an automorphism of $A$, called {\it the Miyamoto involution}, $\tau_a:A\rightarrow A$ such that if $x=x_1+x_2$, where $x_1\in A_+$ and $x_2\in A_-$, then
$x^{\tau_a}=x_1-x_2$.

Clearly, if $b$ is an axis in $A$, then so is
$b^{\tau_a}$ and $A_1(b)=A_1(b)^{\tau_a}$,
$A_\frac{1}{2}(b)=A_\frac{1}{2}(b)^{\tau_a}$, and $A_\eta(b)=A_\eta(b)^{\tau_a}$.
In what follows, we will use these properties without explanation.
First, we describe $\mathcal{PC}(\eta)$-axial algebras generated by two primitive axes. We will use double angular brackets $\langle\!\langle~
\rangle\!\rangle$ to indicate subalgebra generation, leaving single brackets for the linear span.
\begin{prop}\label{p:2gen}
Let $A$ be a $\mathcal{PC}(\eta)$-axial algebra.
Suppose that $a$ and $b$ are primtivie axes in $A$.
Write $b = \alpha a + b_{\eta} + b_{\frac{1}{2}}$, where $b_\eta\in A_\eta(a)$, $b_{\frac{1}{2}}\in A_\frac{1}{2}(a)$, and $\alpha\in\mathbb{F}$. Then the following statements hold
\begin{enumerate}[$(i)$]
\item
$b_{\eta} = \frac{1}{1-2\eta}(\alpha a + b - 2ab)$ and
$b_{\frac{1}{2}} = \frac{2}{1-2\eta}(\alpha (\eta-1) a  -\eta b +ab);$

\item $a(ab)=\frac{1}{2} \bigl((1-\eta)\alpha a - \eta b + (1+2\eta) ab\bigr)$;
\item $(ab)^2=\frac{1}{4}\Bigl(\bigl((1-\eta)(1-2\eta)\alpha-\eta(1+2\eta)\bigr)(a+b)+\bigl(2\alpha(1-\eta)(1+2\eta)+6\eta+4\eta^2\bigr)ab\Bigr)$;
\item the projection of $a$ on $A_1(b)$ equals $\alpha b$.
\end{enumerate}
\end{prop}
\begin{proof}
If $a=b$, then all claims are clear: in this case $\alpha=0$, $b_\eta=b_{\frac{1}{2}}=0$. So we further assume that $a\neq b$.

From the definition of $b_\eta$ and $b_{\frac{1}{2}}$, we get that $ab = \alpha a + \eta b_{\eta} + \frac{1}{2}b_{\frac{1}{2}}$.
Linear combinations of the expression for $ab$ with $b = \alpha a + b_{\eta} + b_{\frac{1}{2}}$ imply that
$$b_{\eta} = \frac{1}{1-2\eta}(\alpha a + b - 2ab),$$
$$b_{\frac{1}{2}} = \frac{2}{1-2\eta}(\alpha (\eta-1) a  -\eta b +ab).$$
This proves $(i)$. Using these expressions, we find that
$$a(ab) = \alpha a + \eta^2 b_{\eta} + \frac{1}{4} b_{\frac{1}{2}}=
\frac{1}{2}\bigl((1-\eta)\alpha a - \eta b + (1+2\eta) ab\bigr).$$
This proves the claim $(ii)$.

Write $a = \beta b + a_{\eta} + a_{\frac{1}{2}}$, where
$a_\eta\in A_\eta(b)$, $a_{\frac{1}{2}}\in A_\frac{1}{2}(b)$, and $\beta\in\mathbb{F}$. To prove $(iii)$, we show that $\alpha=\beta$.
Due to the symmetry of $a$ and $b$, we have
$$b(ab)=
\frac{1}{2} (- \eta a + (1-\eta)\beta b  + (1+2\eta) ab).$$

Substituting the expressions for $b$ and $ab$ with respect to $a$ into the right side, we find that
\begin{multline*}
b(ab)=
\frac{1}{2}\bigl(-\eta a + (1-\eta)\beta (\alpha a + b_{\eta} + b_{\frac{1}{2}})
+ (1+2\eta) (\alpha a + \eta b_{\eta} + \frac{1}{2}b_{\frac{1}{2}})\bigr)\\=
\frac{1}{2}\Bigl(\bigl(-\eta +(1-\eta)\alpha\beta + (1+2\eta)\alpha\bigr)a+
\bigl((1-\eta)\beta + (1+2\eta)\eta\bigr)b_{\eta}
+\bigl((1-\eta)\beta +\frac{1}{2}(1+2\eta)\bigr)b_{\frac{1}{2}}\Bigr).
\end{multline*}

Calculating this expression in a different way by multiplying $ab$ and $b$ written with respect to $a$, we get that
\begin{multline*}
b(ab) = (\alpha a + \eta b_{\eta} + \frac{1}{2}b_{\frac{1}{2}})(\alpha a + b_{\eta} + b_{\frac{1}{2}})\\=\alpha^2 a + \alpha \eta b_{\eta} +\frac{\alpha}{2}b_{\frac{1}{2}} +\alpha \eta^2 b_{\eta}
+\eta b_{\eta}^2+\eta b_{\eta}b_{\frac{1}{2}}+
\frac{\alpha}{4}b_{\frac{1}{2}}+\frac{1}{2}b_{\frac{1}{2}}b_{\eta}+\frac{1}{2}b_{\frac{1}{2}}^2.
\end{multline*}

Subtracting one of the representations of $b(ab)$ from the other and equating the summands from $A_\frac{1}{2}(a)$ to zero, we find that

\begin{equation}\label{alp=bet}
\bigl(\frac{1-\eta}{2}\beta +\frac{1+2\eta}{4}-\frac{3\alpha}{4}\bigr)b_{\frac{1}{2}}-\frac{2\eta+1}{2}b_{\eta}b_{\frac{1}{2}}=0.
\end{equation}

Similarly, considering the odd part in the equality $b\cdot b - b=0$, we find that
$$(\alpha-1)b_{\frac{1}{2}}+2b_{\eta}b_{\frac{1}{2}}=0.$$
Therefore,
$$b_{\eta}b_{\frac{1}{2}} = \frac{1-\alpha}{2}b_{\frac{1}{2}}.$$
Substituting this expression for $b_{\eta}b_{\frac{1}{2}}$ to \ref{alp=bet}, we get that
$$0=\bigl(\frac{1-\eta}{2}\beta +\frac{1+2\eta}{4}-\frac{3\alpha}{4}-\frac{(2\eta+1)}{2}\frac{(1-\alpha)}{2}\bigr)b_{\frac{1}{2}}=\frac{1}{2}(\eta-1)(\alpha-\beta)b_{\frac{1}{2}}.$$

By assumption, we have $\eta\neq1$. Suppose that $\alpha\neq\beta$.
Then $b_{\frac{1}{2}}=0$.  By $(i)$, we find that
$\alpha (\eta-1)a-\eta b+ab=0$. Due to the symmetry of $a$ and $b$,
we infer that $\alpha (\eta-1)a-\eta b=\beta(\eta-1)b-\eta a$.
This implies that $(\alpha (\eta-1)+\eta)a=(\beta(\eta-1)+\eta)b$.
Since $a\neq b$ and $\eta\neq1$, we find that $\alpha=\beta$; a contradiction.

It remains to express $(ab)^2$ in terms of $a$, $b$, and $ab$.
Consider the action of the Miyamoto involution $\tau_a$:
$$b^{\tau_a}=\alpha a + b_{\eta} - b_{\frac{1}{2}}=b-2b_{\frac{1}{2}}=
\frac{1}{(1-2\eta)}(4\alpha (1-\eta) a + (1+2\eta) b -4ab).$$

Using the fact that $b^{\tau_a}$ is an idempotent, we get that
$$\bigl(\frac{1}{(1-2\eta)}(4\alpha (1-\eta) a + (1+2\eta) b -4ab)\bigr)^2-\frac{1}{(1-2\eta)}(4\alpha (1-\eta) a + (1+2\eta) b -4ab)=0.$$

Opening the square and substituting expressions for $(ab)a$ and $(ab)b$ with $\beta=\alpha$, we find that
\begin{multline*}
\frac{1}{(1-2\eta)^2}\Bigl(16\alpha^2 (1-\eta)^2 a + (1+2\eta)^2 b +16(ab)^2
+8\alpha (1-\eta) (1+2\eta) ab \\-32\alpha (1-\eta) \frac{1}{2} \bigl((1-\eta)\alpha a - \eta b + (1+2\eta) ab)\bigr)-8(1+2\eta)\frac{1}{2}\bigl(- \eta a + (1-\eta)\alpha b  + (1+2\eta) ab\bigr)\Bigr)
\\-\frac{1}{(1-2\eta)}(4\alpha (1-\eta) a + (1+2\eta) b -4ab)=0.
\end{multline*}
This allows us to find the desired expression:
$$(ab)^2=\frac{1}{4}\Bigl(\bigl((1-\eta)(1-2\eta)\alpha-\eta(1+2\eta)\bigr)(a+b)+\bigl(2\alpha(1-\eta)(1+2\eta)+6\eta+4\eta^2\bigr)ab\Bigr).$$
\end{proof}

\begin{cor}
Suppose that $a$ and $b$ are primitive axes in a $\mathcal{PC}(\eta)$-axial algebra.  Then the subalgebra $\langle \langle a,b \rangle \rangle$,
generated by two primitive axes $a$ and $b$, is spanned by $a$, $b$, and $ab$.
\end{cor}
\begin{proof}
Clearly, $a$, $b$, and $ab$ are elements of $\langle \langle a,b \rangle \rangle$. On the other hand, Proposition~\ref{p:2gen} implies that
the set $\{a,b,ab\}$ is closed under multiplication.
\end{proof}

\begin{rmk}
The corresponding expression for $(ab)^2$ in \cite[Table~1]{Whybrow2} contains a misprint.
\end{rmk}

Note that if $\eta=-1$, then the product rules in this proposition indeed define a $\mathcal{PC}(-1)$-axial algebra generated by two primitive axes $a$ and $b$. This will follow from Corollary~\ref{cor:minus-one} below.
If $\eta\neq-1$, then additional restrictions must be added (see Proposition~\ref{p:train} and Corollary~\ref{cor:fusion-restricion}).

\begin{prop}\label{p:2dim}
Suppose that $a$ and $b$ are distinct primitive axes in a $\mathcal{PC}(\eta)$-axial algebra.  If the subalgebra
generated by $a$ and $b$ is $2$-dimensional, then either $ab=\frac{a+b}{2}$ or $\eta=-1$ and $ab=-(a+b)$. Conversely, if $A$ is a $2$-dimensional commutative algebra with a basis $\{a,b\}$ such that $a^2=a$, $b^2=b$ and
either $ab=\frac{a+b}{2}$ or $\eta=-1$ and $ab=-(a+b)$, then $A$ is a
$\mathcal{PC}(\eta)$-axial algebra generated by primitive axes $a$ and $b$.
\end{prop}
\begin{proof}
By Proposition~\ref{p:2gen}, $b=\alpha a+b_{\frac{1}{2}}+b_\eta$
and $a=\alpha b+a_{\frac{1}{2}}+a_\eta$, where $\alpha\in\mathbb{F}$,
$b_\frac{1}{2}\in A_{\frac{1}{2}}(a)$, $b_\eta\in A_{\eta}(a)$,
$a_\frac{1}{2}\in A_{\frac{1}{2}}(b)$, and $a_\eta\in A_{\eta}(b)$.

Assume that $b_\frac{1}{2}\neq0$ and $b_\eta\neq0$. Then
$a$, $b_\frac{1}{2}$, and $b_\eta$ are eigenvectors of $ad_a$, so
they are linearly independent; a contradiction with $\dim A=2$.
Therefore, $b_\frac{1}{2}=0$ or $b_\eta=0$ and, similarly,
$a_\frac{1}{2}=0$ or $a_\eta=0$. Suppose $b_\frac{1}{2}=0=a_{\eta}$.
It follows from Proposition~\ref{p:2gen}$(i)$ that $\alpha(\eta-1)a-\eta b+ab=0$ and $\alpha b+a-2ab=0$. Then
$2\alpha(\eta-1)a-2\eta b=-\alpha b-a$ and hence
$(2\alpha(\eta-1)+1)a=(2\eta-\alpha)b$. Since $a$ and $b$ are linearly independent, we infer that $\alpha=2\eta$ and $0=2\alpha(\eta-1)+1$.
Therefore, $0=4\eta^2-4\eta+1$; a contradiction with $\eta\neq\frac{1}{2}$.

Due to the symmetry of $a$ and $b$, we derive that either $b_\eta=a_\eta=0$ or $b_\frac{1}{2}=a_\frac{1}{2}=0$.
In the first case, $\alpha a+b-2ab=\alpha b+a-2ab$, so $(\alpha-1)a=(\alpha-1)b$. This implies that $\alpha=1$ and hence $0=b_\eta=a+b-2ab=0$. In the second case, $\alpha(\eta-1)a-\eta b+ab=\alpha (\eta-1)b-\eta a+ab$, so $(\alpha(\eta-1)+\eta)a=(\alpha(\eta-1)+\eta)b$. This implies that $\alpha=\frac{-\eta}{\eta-1}$ and hence $0=b_\frac{1}{2}=\frac{-\eta}{\eta-1}(\eta-1)a-\eta b+2ab=-\eta a-\eta b+ab$. Thus, we find that either $ab=\frac{a+b}{2}$ or $ab=\eta(a+b)$.

Conversely, suppose that $A$ is a $2$-dimension commutative algebra with a basis $\{a,b\}$ such that $a^2=a$, $b^2=b$. First, we consider the case when $ab=\eta(a+b)$.
Then $a(\eta a+(\eta-1)b)=\eta a+(\eta-1)ab=(\eta+\eta^2-\eta)a+(\eta^2-\eta)b=\eta(\eta a+(\eta-1)b)$. So $\eta a+(\eta-1)b$ is an eigenvector of $ad_a$ associated with $\eta$. Therefore, $A=A_1(a)\oplus A_\eta(a)$ and, similarly, $A=A_1(b)\oplus A_{\eta}(b)$.
Since $A_{1/2}(a)=0$, we see that $a$ is a primitive $\mathcal{PC}(\eta)$-axis in $A$
if and only if  $A_{\eta}(a)A_{\eta}(a)\subseteq A_1(a)$.
We know that $A_{\eta}(a)=\langle \eta a+(\eta-1) b\rangle$. Now
$(\eta a+(\eta-1) b)^2=\eta^2 a+(\eta-1)^2b+2(\eta^2-\eta)ab=(2\eta^3-\eta^2)a+(2\eta^3-\eta^2-2\eta+1)b$.
So $A_{\eta}(a)A_{\eta}(a)\subseteq A_1(a)$  is equivalent to the equality $0=2\eta^3-\eta^2-2\eta+1=(1-\eta^2)(1-2\eta)$.
Since $\eta\neq1,\frac{1}{2}$, in this case $a$ and $b$ are primitive $\mathcal{PC}(\eta)$-axis if and only if $\eta=-1$.

Let now $ab=\frac{a+b}{2}$. Then $a(a-b)=a^2-\frac{a+b}{2}=\frac{a-b}{2}$. So $a-b$ is an eigenvector of $ad_a$ associated with $\frac{1}{2}$. Therefore, $A=A_1(a)\oplus A_{1/2}(a)$ and, similarly, $A=A_1(b)\oplus A_{1/2}(b)$.
To check that $a$ is a $\mathcal{PC}(\eta)$-axis in $A$, it is enough to show that $A_{1/2}(a)A_{1/2}(a)\subseteq A_\eta(a)\oplus A_1(a)=A_1(a)$.
We know that $A_{1/2}(a)=\langle a-b\rangle$. Since $(a-b)^2=a^2-2ab+b^2=a-a-b+b=0$, we infer that $a$ is a primitive $\mathcal{PC}(\eta)$-axis. Similarly, we see that
$b$ is a primitive $\mathcal{PC}(\eta)$-axis and hence $A$ is a primitive $\mathcal{PC}(\eta)$-axial algebra.
\end{proof}

\begin{prop}\label{p:basis}
Let $A$ be a $\mathcal{PC}(\eta)$-axial algebra.
Then $A$ is the linear span of its set of primitive axes. In particular, there exists a basis consisting of primitive axes.
\end{prop}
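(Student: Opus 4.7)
The plan, following \cite[Section~4]{hss}, is to show that the linear span $B$ of the set $X$ of all primitive axes of $A$ is already a subalgebra. Since $A$ is generated (as an algebra) by a set $Y\subseteq X$ of primitive axes, any subalgebra of $A$ containing $X$ must equal $A$; hence $B=A$, and the clause about a basis of primitive axes follows from the standard fact that every spanning set contains a basis.

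To verify that $B$ is a subalgebra it suffices, by bilinearity, to prove $ab\in B$ for every pair $a,b\in X$. The crucial tool is the Miyamoto involution $\tau_a$: because the fusion law $\mathcal{PC}(\eta)$ is $\mathbb{Z}_2$-graded (as recalled at the start of the section), $\tau_a$ is an algebra automorphism of $A$, and consequently $b^{\tau_a}$ is again a primitive axis---its $1$-eigenspace is $A_1(b)^{\tau_a}=\langle b^{\tau_a}\rangle$, which is one-dimensional---so $b^{\tau_a}\in X$.

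The required identity is already visible in the proof of Proposition~\ref{p:2gen}, where it is shown that
$$b^{\tau_a}=\frac{1}{1-2\eta}\bigl(4\alpha(1-\eta)\,a+(1+2\eta)\,b-4\,ab\bigr),$$
valid because $\eta\neq\tfrac12$, with $\alpha$ the coefficient of $a$ in the $A_1(a)$-part of $b$. Solving for $ab$ exhibits it as a linear combination of the three primitive axes $a$, $b$, and $b^{\tau_a}$, so $ab\in B$ as required. There is no substantive obstacle: the only conceptual step is noticing that $\tau_a$ supplies a third primitive axis $b^{\tau_a}$ through which $ab$ can be recovered linearly, and the rest is a direct rearrangement of formulas already established in Proposition~\ref{p:2gen}.
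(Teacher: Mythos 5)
Your proof is correct and follows essentially the same route as the paper: both reduce everything to the two-generated case and use the identity $b^{\tau_a}=\frac{1}{1-2\eta}\bigl(4\alpha(1-\eta)a+(1+2\eta)b-4ab\bigr)$ from Proposition~\ref{p:2gen} to write $ab$ as a linear combination of the primitive axes $a$, $b$, $b^{\tau_a}$. The only (cosmetic) difference is that you phrase the reduction as ``the span of the primitive axes is closed under multiplication, hence is a subalgebra containing the generators,'' whereas the paper runs an equivalent induction on the length of words in the generating axes.
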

\begin{proof}
Suppose that $A$ is generated by two primitive axes $a$ and $b$. By Proposition \ref{p:2gen},
$A$ is spanned by $a$, $b$, and $ab$. In the proof of
Proposition \ref{p:2gen}, it was shown that
$$b^{\tau_a}=\frac{1}{(1-2\eta)}(4\alpha (1-\eta) a + (1+2\eta) b -4ab).$$
Therefore, $A$ is spanned by three axes $a$, $b$, and $b^{\tau_a}$.

Now move to the general case for $A$. Denote by $B$ the linear span of the set of primitive axes from $A$. We prove by induction on $n$ that every word $w(a_1,\ldots,a_n)$ being a product of $n$ primitive axes $a_1,\ldots,a_n\in A$ is in $B$.
Clearly, the statement is true when $n\leq 2$. Assume $n>2$.
Write $w=w_1 \cdot w_2$, where $w_1$ and $w_2$ have smaller lengths. By induction, we may write $w_1=\alpha_1 b_1 + \ldots+\alpha_k b_k$ and $w_2=\beta_1 c_1 + \ldots +\beta_m c_m$ for some primitive axes $\{b_i\}_{1\leq i\leq k}$ and $\{c_j\}_{1\leq j\leq m}$ with coefficients $\alpha_i,\beta_j\in\mathbb{F}$.
Therefore, the product $w=w_1 \cdot w_2$ is a linear combination of products $b_ic_j$ involving two axes, which is proved to be in $B$. Thus, $A$ coincides with $B$.
\end{proof}

Proposition~\ref{p:basis} allows us to prove, as in \cite{hss}, the existence of a {\it Frobenius form} for every $\mathcal{PC}(\eta)$-axial algebra $A$. Recall that this means a non-zero symmetric bilinear form that associates with the algebra product, that is $(ab,c)=(a,bc)$ for all $a,b,c\in A$.

\begin{prop}\label{p:form}
Let $A$ be a $\mathcal{PC}(\eta)$-axial algebra. Then $A$ admits a unique Frobenius form such that $(a,a)=1$ for all primitive axes $a\in A$.
\end{prop}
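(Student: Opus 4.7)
The approach, following \cite[Section~4]{hss}, is to define the form via $1$-eigenspace projections and verify the Frobenius property by exploiting orthogonality of Peirce components. For each primitive axis $a$, let $f_a : A \to \mathbb{F}$ be the linear functional returning the coefficient of $a$ in the Peirce decomposition $x = f_a(x)\,a + x_\eta + x_{1/2}$ with $x_\lambda \in A_\lambda(a)$. The proof of Proposition~\ref{p:2gen} already establishes the basic symmetry $f_a(b) = f_b(a)$ for any two primitive axes (this is the identity $\alpha = \beta$ derived there).

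Fixing a basis of primitive axes via Proposition~\ref{p:basis}, I would set $(a, b) := f_a(b)$ on the basis and extend bilinearly. Well-definedness reduces, via the above symmetry, to the observation that $\sum_i \alpha_i a_i = 0$ implies $\sum_i \alpha_i f_{a_i}(b) = f_b\bigl(\sum_i \alpha_i a_i\bigr) = 0$ for every primitive axis $b$, and hence for every element by Proposition~\ref{p:basis}. Symmetry of the form and the normalization $(a, a) = 1$ then follow immediately.

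For Frobenius I would first show each Miyamoto involution $\tau_a$ is an isometry. For any algebra automorphism $\sigma$ of $A$ one has the equivariance $f_{u^\sigma}(v) = f_u(v^{\sigma^{-1}})$ (applying $\sigma^{-1}$ to the decomposition of $v$ with respect to $u^\sigma$ gives the decomposition of $v^{\sigma^{-1}}$ with respect to $u$), so $(b^{\tau_a}, c^{\tau_a}) = f_{b^{\tau_a}}(c^{\tau_a}) = f_b(c) = (b, c)$ on primitive axes, and then on all of $A$ by bilinearity. Since $\tau_a$ has $(+1)$-eigenspace $A_1(a) \oplus A_\eta(a)$ and $(-1)$-eigenspace $A_{1/2}(a)$, isometry invariance forces $A_{1/2}(a)$ to be orthogonal to $A_1(a) \oplus A_\eta(a)$; combined with $(a, A_\eta(a)) = f_a(A_\eta(a)) = 0$, this yields pairwise orthogonality of all three Peirce pieces. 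Decomposing arbitrary $x, y$ as $x = x_1 + x_\eta + x_{1/2}$ and $y = y_1 + y_\eta + y_{1/2}$ with respect to $a$, all off-diagonal products vanish, so both $(ax, y)$ and $(x, ay)$ collapse to the same diagonal sum $(x_1, y_1) + \eta(x_\eta, y_\eta) + \tfrac{1}{2}(x_{1/2}, y_{1/2})$, giving $(ax, y) = (x, ay)$. Expanding a general middle element as a linear combination of primitive axes then yields full Frobenius.

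Uniqueness is immediate: if $\langle\cdot, \cdot\rangle$ is another Frobenius form with $\langle a, a\rangle = 1$ on primitive axes, then $\langle a, x\rangle = \langle a^2, x\rangle = \langle a, ax\rangle = \lambda \langle a, x\rangle$ for $x \in A_\lambda(a)$ forces $\langle a, x\rangle = 0$ whenever $\lambda \in \{\eta, \tfrac{1}{2}\}$, so $\langle a, y\rangle = f_a(y) = (a, y)$ on each primitive axis, which determines $\langle\cdot, \cdot\rangle$ by Proposition~\ref{p:basis}. The conceptual hinge of the argument is the equivariance $f_{u^\sigma}(v) = f_u(v^{\sigma^{-1}})$ and the resulting isometry property of the Miyamoto involutions; once these are in place, the $\mathbb{Z}_2$-grading of $\mathcal{PC}(\eta)$ passes through Peirce orthogonality to Frobenius essentially mechanically.
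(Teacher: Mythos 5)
Your proposal is correct and follows essentially the same route as the paper's proof: defining the form by projection onto $A_1(a)$, importing symmetry from the two-generated case, using invariance under the Miyamoto involutions to obtain orthogonality of the Peirce components, deducing the Frobenius identity from that orthogonality, and proving uniqueness by noting that any Frobenius form forces the same orthogonality. The only cosmetic difference is that you package the invariance step as a general equivariance identity $f_{u^\sigma}(v)=f_u(v^{\sigma^{-1}})$ for automorphisms, where the paper computes the action of $\tau_a$ on the decomposition directly.
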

\begin{proof}
If $a$ is a primitive axis in $A$ and $x\in A$, then
let $\varphi_a(x)$ be equal to the projection coefficient of $x$ onto $A_1(a)$.

Now we define the Frobenius form. By Proposition~\ref{p:basis},
there exists a basis $\mathcal{B}$ of $A$ consisting of primitive axes. For $a,b\in\mathcal{B}$, define $(a,b)=\varphi_a(b)$. Extending this by linearity, we obtain a bilinear form on $A$. In particular, if $u\in A$ and $a\in\mathcal{B}$, then $(a,u)=\varphi_a(u)$.

Since $(a,a)=1$ for every $a\in\mathcal{B}$, the form is non-zero.
It follows from Proposition~\ref{p:2gen} that $(a,b)=(b,a)$, so by linearity the form is symmetric.

Now we show that $(a,u)=\varphi_a(u)$ for every
primitive axis $a\in A$ and $u\in A$. Write
$u=\sum\limits_{b\in\mathcal{B}}\alpha_bb$.
Then, using Proposition~\ref{p:2gen}, we see that $$(a,u)=\sum\limits_{b\in\mathcal{B}}\alpha_b (a,b)=\sum\limits_{b\in\mathcal{B}}\alpha_b (b,a)=\sum\limits_{b\in\mathcal{B}}\alpha_b\varphi_b(a)=\sum\limits_{b\in\mathcal{B}}\alpha_b\varphi_a(b)=\varphi_a(u).$$

Note that the form is invariant under the action of the Miyamoto involution $\tau_a$ for every primitive axis  $a\in A$. Indeed, if
$b,c\in\mathcal{B}$ and $b=\alpha c+b_\eta+b_\frac{1}{2}$,
where $b_\eta\in A_\eta(c)$ and $b_\frac{1}{2}\in A_\frac{1}{2}(c)$, then
$b^{\tau_a}=\alpha c^{\tau_a}+b_\eta^{\tau_a}+b_\frac{1}{2}^{\tau_a}$. This implies that $(b,c)=\alpha=(b^{\tau_a},c^{\tau_a})$.

Now we verify the identity $(xy,z)=(x,yz)$, which is linear in $x$, $y$ and $z$.
We may assume that $y$ is a primitive axis, and that $x$ and $z$ are eigenvectors of $ad_y$ with eigenvalues $\mu$
and $\lambda$, respectively.

If $\mu = \lambda$, then $(xy,z)=(\mu x,z)=\mu(x,z)=\lambda(x,z)=(x,\lambda z)=(x,yz)$.
If $\mu \not = \lambda$, then it is sufficient to show that $(x,z)=0$. So we are left to prove that the
decomposition $A=A_1(y)\oplus A_\eta(y)\oplus A_{\frac{1}{2}}(y)$ is orthogonal with respect to the form.

If $x\in A_{\eta}(y) \oplus A_{\frac{1}{2}}(y)$, then $(y,x)=\varphi_y(x)=0$.
By the symmetry, we may assume that $x \in A_{\eta}(y)$ and $z\in A_{\frac{1}{2}}(y)$.
Now  $(x,z)=(x^{\tau_a},z^{\tau_a})=(x,-z)=-(x,z)$, therefore $(x,z)=0$. Thus, the form is Frobenius.

Conversely, suppose that a Frobenius form $(\cdot,\cdot)$ on $A$ satisfies the condition $(a,a)=1$ for all primitive axes $a\in A$.
Since the form is Frobenius, we see that the summands in the decomposition $A=A_1(a)\oplus A_\eta(a)\oplus A_{\frac{1}{2}}(a)$ are orthogonal with respect to the form.
Take $a\in\mathcal{B}$ and $u\in A$. Then $u=\varphi_a(u)a+u_\eta+u_\frac{1}{2}$, where $u_\eta\in A_\eta(a)$ and $u_\frac{1}{2}\in A_\frac{1}{2}(a)$. Now $(a,u)=(a,\varphi_a(u)a)+(a,u_\eta)+(a,u_\frac{1}{2})=\varphi_a(u)$. Therefore, this form coincides with the constructed above Frobenius form. The uniqueness is proved.
\end{proof}

\begin{prop}\label{p:train}
Suppose that $\eta\neq-1$ and $A$ is a $\mathcal{PC}(\eta)$-axial algebra. Denote the Frobenius form on $A$ from Proposition~\ref{p:form} by $(\cdot,\cdot)$. Then the following statements hold.
\begin{enumerate}[$(i)$]
\item If $a$ and $b$ are primitive axes in $A$, then $(a,b)=1$.
\item If $a$ is an axis in $A$, then the map $w_a:A\rightarrow\mathbb{F}$ defined by $w_a(x)=(a,x)$, is an $\mathbb{F}$-algebra homomorphism.
Moreover, for every primitive axis $b\in A$ we have $w_a(x)=w_b(x)$ and for all $x,y\in A$ we have $(x,y)=w_a(xy)$.
\end{enumerate}
\end{prop}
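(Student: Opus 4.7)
The plan is to establish (i) first by applying the fusion rule $\eta\ast\eta=1$ inside the $2$-generated subalgebra $\langle\!\langle a,b\rangle\!\rangle$ described in Proposition~\ref{p:2gen}. With $\alpha=\varphi_a(b)=(a,b)$, the element $b_\eta=\frac{1}{1-2\eta}(\alpha a+b-2ab)$ lies in $A_\eta(a)$, and primitivity of $a$ together with the fusion law forces $b_\eta^2\in A_1(a)=\langle a\rangle$. I would expand $(\alpha a+b-2ab)^2$ using the three product formulas of Proposition~\ref{p:2gen} for $a(ab)$, $b(ab)$, and $(ab)^2$, and then read off the coefficients of $b$ and of $ab$. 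A direct but careful computation shows that both coefficients factor as scalar multiples of $(1-\alpha)(1+\eta)$; since $\eta\neq-1$, this forces $\alpha=1$, which is (i).

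For (ii), part (i) immediately gives $w_a(b)=(a,b)=1=w_c(b)$ for any primitive axes $b,c\in A$, so $w_a$ and $w_c$ agree on the spanning set of primitive axes provided by Proposition~\ref{p:basis}, and hence on all of $A$. Call the common functional $w$. To prove $w$ is an algebra homomorphism, by bilinearity it suffices to check $w(bc)=w(b)w(c)=1$ on an arbitrary pair of primitive axes $b,c$. For this I would solve the identity $b^{\tau_a}=\frac{1}{1-2\eta}(4\alpha(1-\eta)a+(1+2\eta)b-4ab)$ from the proof of Proposition~\ref{p:2gen} for the product, obtaining, after substituting $\alpha=1$ and renaming the pair,
\[
bc \;=\; (1-\eta)\,b \;+\; \tfrac{1+2\eta}{4}\,c \;-\; \tfrac{1-2\eta}{4}\,c^{\tau_b}.
\]
Since $c^{\tau_b}$ is again a primitive axis and the three coefficients sum to $(1-\eta)+\tfrac{1+2\eta}{4}-\tfrac{1-2\eta}{4}=1$, part (i) yields $w(bc)=1$. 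The identity $(x,y)=w(xy)$ then follows at once from bilinearity: writing $x=\sum\alpha_ib_i$ and $y=\sum\beta_jc_j$ in primitive axes, one has $(x,y)=\sum\alpha_i\beta_j(b_i,c_j)=\sum\alpha_i\beta_j=w(x)w(y)=w(xy)$.

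I expect the only real obstacle to be the algebraic step underlying (i): one has to expand $b_\eta^2$ via all three product rules of Proposition~\ref{p:2gen} and verify that the coefficients of $b$ and of $ab$ both contain the factor $(1+\eta)$. This is precisely where the exceptional role of $\eta=-1$ surfaces and explains why pseudo-composition algebras behave differently from train algebras in what follows. Once this identity is in hand, part (ii) is a short bookkeeping consequence using Proposition~\ref{p:basis} and the bilinearity of the Frobenius form.
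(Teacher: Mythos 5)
Your strategy for (i) is the same as the paper's: expand $((1-2\eta)b_\eta)^2=(\alpha a+b-2ab)^2$ via the product rules of Proposition~\ref{p:2gen} and use the fusion rule $\eta\ast\eta=1$ to place the result in $A_1(a)=\langle a\rangle$; the factorization you anticipate is exactly what the paper obtains, namely $(2\eta-1)(\alpha-1)\bigl((\alpha+\eta)a+(1+\eta)b-2(\eta+1)ab\bigr)$. The gap is in your last step: ``reading off the coefficients of $b$ and of $ab$'' and concluding $\alpha=1$ tacitly assumes that $a$, $b$, $ab$ are linearly independent. Nothing established before this point rules out $ab\in\langle a,b\rangle$ (the corollary following Proposition~\ref{p:2gen} exhibits exactly such two-dimensional subalgebras for $\eta=-1$, and no prior result excludes them when $\eta\neq-1$), and in that degenerate case membership in $\langle a\rangle$ does not force the individual coefficients of $b$ and $ab$ to vanish. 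The paper closes this case explicitly: if $\alpha\neq1$, then since $1+\eta\neq0$ the displayed element lying in $\langle a\rangle$ forces $b-2ab\in\langle a\rangle$; but $b-2ab=-\alpha a+(1-2\eta)b_\eta$ with $b_\eta\in A_\eta(a)$, so $b_\eta=0$ and $2ab=\alpha a+b$, and then the symmetry $(a,b)=(b,a)$ gives $2ab=a+\alpha b$ as well, whence $(1-\alpha)(a-b)=0$, contradicting $a\neq b$. You need this (or an equivalent) argument to make (i) complete.

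Part (ii) of your proposal is correct and takes a genuinely different route from the paper for the key multiplicativity step. The paper proves $w_a(a_ia_j)=1$ for basis axes via the Frobenius chain $(a,a_ia_j)=(a_i,a_ia_j)=(a_ia_i,a_j)=(a_i,a_j)=1$, which leans on the associativity of the form; you instead solve the Miyamoto identity for the product, obtaining $bc=(1-\eta)b+\tfrac{1+2\eta}{4}c-\tfrac{1-2\eta}{4}c^{\tau_b}$ with coefficients summing to $1$, and apply part (i) to the three primitive axes $b$, $c$, and $c^{\tau_b}$ (the last is primitive because $\tau_b$ is an automorphism). Both arguments are valid and of comparable length; yours has the mild advantage of not invoking the Frobenius property at that point, at the cost of depending on the explicit $\tau_b$-formula and on $\alpha=1$ from (i). The remaining reductions by bilinearity, including $(x,y)=w_a(xy)$, are as in the paper.
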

\begin{proof}
Suppose that $a$ and $b$ are primitive axes in $A$.
Prove that $(a,b)=1$. This is true if $a=b$, so we can assume that $a\neq b$.

Write $b=\alpha a+b_\eta+b_\frac{1}{2}$, where $\alpha\in\mathbb{F}$, $b_\eta\in A_\eta(a)$, and $b_\frac{1}{2}\in A_\frac{1}{2}(a)$. In the proof of Proposition~\ref{p:form} we see that $\alpha=(a,b)=w_a(b)$.
It is shown in Proposition~\ref{p:2gen} that $b_{\eta} = \frac{1}{1-2\eta}(\alpha a + b - 2ab)$.
Using Proposition~\ref{p:2gen}, we find $((1-2\eta)b_{\eta})^2:$
\begin{multline*}
(\alpha a + b - 2ab)^2=\alpha^2a+b+4(ab)^2+2\alpha ab-4\alpha a(ab)-4b(ab)=\alpha^2 a+b+4(ab)^2+2\alpha ab\\-2\alpha\bigl((1-\eta)\alpha a - \eta b + (1+2\eta) ab\bigr)-2\bigl((1-\eta)\alpha b - \eta a + (1+2\eta) ab\bigr)\\=
(\alpha^2(2\eta-1)+2\eta)a+(1+2\alpha\eta-2\alpha(1-\eta) )b+(-4\alpha\eta-2-4\eta)ab+4(ab)^2.
\end{multline*}
Substituting the value of $(ab)^2$ from Proposition~\ref{p:2gen},
we find that
\begin{multline*}
(\alpha a + b - 2ab)^2=
\bigl(\alpha^2(2\eta-1)+2\eta+(1-\eta)(1-2\eta)\alpha-\eta(1+2\eta)\bigr)a\\+
\bigl(1+2\alpha(2\eta-1)+(1-\eta)(1-2\eta)\alpha-\eta(1+2\eta)\bigr)b\\+
\bigl(2\alpha(1-\eta)(1+2\eta)+6\eta+4\eta^2-4\alpha\eta-2-4\eta\bigr)ab
\\=
(2\eta-1)(\alpha^2+\alpha(\eta-1)-\eta)a+
(2\eta-1)(2\alpha+\alpha(\eta-1)-1-\eta)b\\+
\bigl(2\alpha(-2\eta^2+\eta+1-2\eta)+(2\eta-1)(2\eta+2)\bigr)ab=
(2\eta-1)(\alpha-1)\bigl( (\alpha+\eta)a+(1+\eta)b-2(\eta+1)ab\bigr).
\end{multline*}
By the fusion rules, we know that $((1-2\eta)b_{\eta})^2\in A_1(a)$, so $$(2\eta-1)(\alpha-1)\bigl( (\alpha+\eta)a+(1+\eta)b-2(\eta+1)ab\bigr)\in A_1(a).$$
Suppose that $\alpha\neq1$.
Then, since $1+\eta\neq0$, we infer that $b-2ab\in A_1(a)$.
On the other hand, $b-2ab=-\alpha a+(1-2\eta)b_\eta$.
Therefore, we have $b_\eta=0$. So $2ab=b+\alpha a$. By the symmetry of $a$ and $b$, we have $b+\alpha a=a+\alpha b$, which is equivalent to $(1-\alpha)(a-b)=0$. Since $a\neq b$, we infer that $\alpha=1$; a contradiction. Thus, if $a$ and $b$ are primitive axes, then $(a,b)=1$.

We know that there exists a basis $\mathcal{B}$ of $A$ consisting of primitive axes. Suppose that $x,y\in A$ and $x=\sum\lambda_i a_i$,
$y=\sum\mu_i a_i$, where $\lambda_i,\mu_i\in\mathbb{F}$,
$a_i\in\mathcal{B}$, and $i$ runs over a finite set of indices.

First show that $w_a(x)=w_b(x)$:
$$w_a(x)-w_b(x)=(a,x)-(b,x)=(a,\sum\limits_i\lambda_i a_i)-
(b,\sum\limits_i\lambda_i a_i)=\sum\limits_i\lambda_i-\sum\limits_i\lambda_i=0.$$

Clearly, $w_a(x)$ is an $\mathbb{F}$-linear map.
It remains to prove that $w_a(xy)=w_a(x)w_a(y)$ and $(x,y)=w_a(xy)$.
Using that $w_a(a_ia_j)=w_{a_i}(a_ia_j)$ for all $i$ and $j$, we find that
\begin{multline*}
w_a(xy)=(a,xy)=(a,\sum\limits_{i,j}\lambda_i\mu_j a_i a_j)=
\sum\limits_{i,j}\lambda_i\mu_j(a, a_i a_j)=\sum\limits_{i,j}\lambda_i\mu_j(a_i, a_i a_j)\\ \sum\limits_{i,j}\lambda_i\mu_j(a_ia_i, a_j)=\sum\limits_{i,j}\lambda_i\mu_j(a_i,a_j)=\sum\limits_{i,j}\lambda_i\mu_j.
\end{multline*}
On the other hand,
$$w_a(x)w_a(y)=(a,x)(a,y)=(a,\sum\limits_i\lambda_ia_i)(a,\sum\limits_j\mu_ja_j)=(\sum\limits_i\lambda_i)(\sum\limits_j\mu_j)=\sum\limits_{i,j}\lambda_i\mu_j.
$$
Therefore, $w_a(xy)=w_a(x)w_a(y)$. Similarly, we see that $(x,y)=\sum\limits_{i,j}\lambda_i\mu_j=w_a(xy)$.
\end{proof}

\begin{cor}
If $\eta\neq-1$, then every $\mathcal{PC}(\eta)$-axial algebra
is a baric algebra.
\end{cor}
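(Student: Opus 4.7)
The plan is to reduce the corollary immediately to Proposition~\ref{p:train}(ii). By definition, showing that $A$ is baric amounts to producing a non-zero $\mathbb{F}$-algebra homomorphism $\omega\colon A\to\mathbb{F}$, and such a map has essentially already been constructed in the previous proposition.

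Concretely, I would first invoke Proposition~\ref{p:basis} to obtain at least one primitive axis $a\in A$ (in fact, a whole basis of them). Next, applying Proposition~\ref{p:train}(ii) to this $a$ supplies the map $w_a\colon A\to\mathbb{F}$ given by $w_a(x)=(a,x)$, and the proposition asserts precisely that $w_a$ is an $\mathbb{F}$-algebra homomorphism. Finally, the normalisation of the Frobenius form from Proposition~\ref{p:form} gives $w_a(a)=(a,a)=1$, so $w_a$ is non-zero. Setting $\omega:=w_a$ exhibits $A$ as a baric algebra.

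There is no genuine obstacle here, since the entire content of the corollary has been packaged into Proposition~\ref{p:train} and the argument is effectively a single citation; the only thing to check is non-triviality of $w_a$, which is immediate from the normalisation. It may be worth remarking in passing that Proposition~\ref{p:train}(ii) also asserts $w_a=w_b$ for any two primitive axes $a,b\in A$, so the weight function $\omega$ is canonical and independent of the chosen primitive axis, but this additional observation is not required for baricity itself.
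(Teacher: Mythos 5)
Your argument is correct and is exactly the one the paper intends: the corollary follows immediately from Proposition~\ref{p:train}(ii), with non-triviality of $w_a$ coming from the normalisation $(a,a)=1$ in Proposition~\ref{p:form}. The paper leaves this proof implicit, and your write-up fills it in faithfully.
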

It also follows from Proposition~\ref{p:train} that in the case $\eta\neq-1$, the fusion rules are stricter than they are supposed by definition.
\begin{cor}\label{cor:fusion-restricion}
If $\eta\neq-1$ and $a$ is a primitive axis in a $\mathcal{PC}(\eta)$-axial algebra $A$, then $A_\eta(a)^2=\{0\}$ and $A_\frac{1}{2}(a)^2\subseteq A_\eta(a)$.
\end{cor}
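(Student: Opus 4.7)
The plan is to exploit the two structural facts that Proposition~\ref{p:train} gives us in the case $\eta\neq-1$: the map $w_a(x)=(a,x)$ is an algebra homomorphism to $\mathbb{F}$ for every primitive axis $a$, and the Frobenius form satisfies $(x,y)=w_a(xy)$. Combined with the orthogonality of the Peirce decomposition (proved inside Proposition~\ref{p:form}), this will force the stricter fusion rules.

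First I would record that for any $x\in A_\eta(a)\cup A_{1/2}(a)$ we have $w_a(x)=(a,x)=0$, simply because $a\in A_1(a)$ and the decomposition $A=A_1(a)\oplus A_\eta(a)\oplus A_{1/2}(a)$ is orthogonal with respect to the Frobenius form. In particular $w_a$ vanishes on the two non-trivial eigenspaces.

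Next, take $x,y\in A_\eta(a)$. The fusion law of $\mathcal{PC}(\eta)$ gives $xy\in A_1(a)$, so $xy=\lambda a$ for some $\lambda\in\mathbb{F}$. Applying $w_a$ and using $w_a(a)=(a,a)=1$ together with the homomorphism property from Proposition~\ref{p:train}, we get
\[
\lambda=w_a(xy)=w_a(x)w_a(y)=0.
\]
Hence $xy=0$, proving $A_\eta(a)^2=\{0\}$. For $x,y\in A_{1/2}(a)$, the fusion law gives $xy=\lambda a+z$ with $z\in A_\eta(a)$. The coefficient $\lambda$ is the projection onto $A_1(a)$, which by the same argument equals $w_a(xy)=w_a(x)w_a(y)=0$. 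Thus $xy=z\in A_\eta(a)$, yielding $A_{1/2}(a)^2\subseteq A_\eta(a)$.

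There is no real obstacle here: the entire argument is a one-line consequence of the multiplicativity of $w_a$ and the orthogonality of the Peirce pieces, both of which are available to us from the preceding propositions. The only point to keep in mind is that the whole mechanism relies on $\eta\neq-1$, which is exactly the hypothesis under which Proposition~\ref{p:train} provides the homomorphism $w_a$ (and indeed, the statement fails for $\eta=-1$, where pseudo-composition algebras allow non-trivial products of $\eta$-eigenvectors).
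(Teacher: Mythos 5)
Your proposal is correct and follows essentially the same route as the paper's own proof: both use the orthogonality of the Peirce decomposition to get $w_a(x)=0$ on the non-trivial eigenspaces, the multiplicativity of $w_a$ from Proposition~\ref{p:train} to conclude $w_a(xy)=0$, and the fusion law to pin down where $xy$ lives. No gaps.
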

\begin{proof}
Define as in Proposition~\ref{p:train} the map $w:A\rightarrow\mathbb{F}$ by $w(x)=(a,x)$, where $x\in A$.
If $x,y\in A_\eta(a)$, then $w(x)=w(y)=0$ since $A_\eta(a)$ and $A_1(a)$ are orthogonal with respect to the form.
Therefore, we find that $w(xy)=w(x)w(y)=0$.
On the other hand, by the fusion rules, we have $xy\in A_1(a)$ and hence $xy=(xy,a)a=w(xy)a$. So $xy=0$.

Similarly, if $x,y\in A_\frac{1}{2}(a)$, then $w(x)=w(y)=0$
and hence $w(xy)=w(x)w(y)=0$. On the other hand, by the fusion rules, we have $xy\in A_1(a)\oplus A_\eta(a)$ and hence $xy=(xy,a)a+u$, where $u\in A_\eta(a)$. Since $w(xy)=0$, we infer that $xy=u\in A_\eta(a)$.
\end{proof}

In the case of axial algebras of Jordan type, subgroups of the automorphism group generated by Miyamoto involutions play an important role (see, e.g., \cite{hss2}).
As a consequence of Proposition 1, in the conclusion of this section we prove the following assertion, which is of independent interest and can be used in further research.

\begin{lem}
Suppose that $a$ and $b$ are primitive axes in a $\mathcal{PC}(\eta)$-axial algebra.
If $\dim\langle\langle a,b\rangle\rangle=3$, then $\tau_a$ and $\tau_b$ have the following matrices on $\langle\langle a,b\rangle\rangle$ with respect to the basis $a$, $b$, and $ab$\footnote{we suppose that linear transformations act on the right}:
$$[\tau_a]= \frac{1}{1-2\eta} \begin{pmatrix}
1-2\eta & 0 & 0 \\
4\alpha (1-\eta) & 1+2\eta & -4 \\
2\alpha (1-\eta) & 2\eta & -1-2\eta
\end{pmatrix},
[\tau_b]= \frac{1}{1-2\eta} \begin{pmatrix}
1+2\eta & 4\alpha (1-\eta) & -4 \\
0 & 1-2\eta & 0 \\
2\eta & 2\alpha (1-\eta) & -1-2\eta \\
\end{pmatrix},$$
\end{lem}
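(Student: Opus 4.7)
The plan is to read each matrix off the explicit decomposition formulas already established in Proposition~\ref{p:2gen}. Since $\dim\langle\langle a,b\rangle\rangle=3$, the set $\{a, b, ab\}$ is a basis of $\langle\langle a,b\rangle\rangle$, so the rows of $[\tau_a]$ are simply the coordinates of $a^{\tau_a}$, $b^{\tau_a}$, and $(ab)^{\tau_a}$ in this basis (and likewise for $\tau_b$).

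For $\tau_a$, I would use that $\tau_a$ fixes $A_1(a)\oplus A_\eta(a)$ and acts as $-1$ on $A_{\frac{1}{2}}(a)$. The first row is immediate from $a^{\tau_a}=a$. Writing $b=\alpha a + b_\eta + b_{\frac{1}{2}}$ yields $b^{\tau_a}=b-2b_{\frac{1}{2}}$, which after substituting the formula for $b_{\frac{1}{2}}$ from Proposition~\ref{p:2gen} gives exactly the expression $\frac{1}{1-2\eta}(4\alpha(1-\eta)a + (1+2\eta)b - 4ab)$ already computed in that proof; this is the second row. For the third row, from $ab=\alpha a + \eta b_\eta + \tfrac{1}{2}b_{\frac{1}{2}}$ I get $(ab)^{\tau_a}=\alpha a + \eta b_\eta - \tfrac{1}{2}b_{\frac{1}{2}}$, and substituting both formulas from Proposition~\ref{p:2gen} and collecting the coefficients of $a$, $b$, and $ab$ produces $\frac{1}{1-2\eta}\bigl(2\alpha(1-\eta)a + 2\eta b - (1+2\eta)ab\bigr)$, as required.

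For $\tau_b$ I would invoke the symmetry of Proposition~\ref{p:2gen}: it was shown there that the projection of $a$ onto $A_1(b)$ has coefficient $\beta=\alpha$. Hence the decomposition of $a$ in terms of the $\tau_b$-grading is obtained from the decomposition of $b$ in terms of the $\tau_a$-grading by interchanging the roles of $a$ and $b$, so the same computation as above, with this swap, yields the second matrix. The only mildly delicate point is the simplification of $(ab)^{\tau_a}$, where several $\alpha$-terms and $\eta$-terms must cancel before the clean coefficient $2\alpha(1-\eta)$ for $a$ emerges; beyond this bookkeeping the argument is entirely routine.
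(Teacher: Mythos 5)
Your proposal is correct and follows essentially the same route as the paper: both read the rows of $[\tau_a]$ off the formulas for $b_\eta$ and $b_{\frac{1}{2}}$ established in Proposition~\ref{p:2gen}, and both obtain $[\tau_b]$ from the symmetry $\beta=\alpha$. The only (immaterial) difference is in the third row: the paper computes $(ab)^{\tau_a}=a\cdot b^{\tau_a}$ using that $\tau_a$ is an automorphism fixing $a$ and then substitutes the formula for $a(ab)$, whereas you expand $ab=\alpha a+\eta b_\eta+\tfrac{1}{2}b_{\frac{1}{2}}$ and negate the odd part directly; both yield $\frac{1}{1-2\eta}\bigl(2\alpha(1-\eta)a+2\eta b-(1+2\eta)ab\bigr)$.
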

\begin{proof}
Clearly, $a^{\tau_a}=a$.
We see in the proof of Proposition~\ref{p:2gen}, that
$$b^{\tau_a}=\frac{1}{(1-2\eta)}(4\alpha (1-\eta) a + (1+2\eta) b -4ab).$$
Now we find that
\begin{multline*}
(ab)^{\tau_a}=ab^{\tau_a}=\frac{1}{(1-2\eta)}(4\alpha (1-\eta) a + (1+2\eta) b -4ab)a
\\=\frac{1}{(1-2\eta)}(4\alpha (1-\eta) a + (1+2\eta) ab -4a(ab)).
\end{multline*}
Substituting the expression for $a(ab)$ from Proposition~\ref{p:2gen},
we get that
$$(ab)^{\tau_a}=\frac{1}{1-2\eta}(2\alpha (1-\eta) a +2\eta b -(1+2\eta) ab).$$
Therefore, we find all elements of the matrix for $\tau_a$. By the symmetry of $a$ and $b$,
we get the matrix for $\tau_b$.
\end{proof}

\section{Proof of Theorem~\ref{th:1}}
Throughout this section we suppose that
$\mathbb{F}$ is a field of characteristic not two and
$A$ is a primitive $\mathcal{PC}(\eta)$-axial algebra over $\mathbb{F}$, where $\eta\in\mathbb{F}\setminus\{1,\frac{1}{2}\}$. If $\eta=-1$, then we also assume that $\operatorname{char}(\mathbb{F})\neq3$ since otherwise $\eta=-1=\frac{1}{2}$. We denote by $(\cdot,\cdot)$ the Frobenius form on $A$ from Proposition~\ref{p:form}. To prove that $A$ is a pseudo-composition algebra if $\eta=-1$, we show that a linearization of the identity $x^3=(x,x)x$ holds on $A$.
If $\operatorname{char}(\mathbb{F})\neq3$, then the linearization implies that main identity. The same strategy is used for the case $\eta\neq-1$. The proof is split into several assertions.

\begin{lem}\label{l:a(ax)}
Suppose that $a$ is an axis in $A$ and $x\in A$.
Then $a(ax)=(a,x)\frac{1-\eta}{2}a-\frac{\eta}{2}x+\frac{1+2\eta}{2}ax$.
\end{lem}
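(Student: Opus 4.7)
The plan is a direct verification via the Peirce decomposition of $a$, exploiting primitivity of the axis and the identification of $(a,x)$ with the projection coefficient $\varphi_a(x)$ established in Proposition~\ref{p:form}. Since the claimed identity is linear in $x$, it suffices to check it for $x$ in each Peirce component separately, but it is cleaner to do the whole decomposition at once.

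More concretely, I would start by writing
$x = \alpha a + x_\eta + x_{1/2}$,
where $x_\eta \in A_\eta(a)$, $x_{1/2} \in A_{1/2}(a)$, and $\alpha \in \mathbb{F}$; primitivity of $a$ guarantees that such a decomposition exists and is unique. By the proof of Proposition~\ref{p:form}, the coefficient $\alpha$ coincides with $\varphi_a(x) = (a,x)$. Applying $ad_a$ once gives
\[
ax = \alpha a + \eta x_\eta + \tfrac{1}{2} x_{1/2},
\]
and applying it a second time yields
\[
a(ax) = \alpha a + \eta^2 x_\eta + \tfrac{1}{4} x_{1/2}.
\]

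The remaining step is to substitute the expressions for $x$ and $ax$ into the right-hand side
\[
(a,x)\tfrac{1-\eta}{2}a - \tfrac{\eta}{2}x + \tfrac{1+2\eta}{2}ax
\]
and collect coefficients of $a$, $x_\eta$, $x_{1/2}$. A short computation shows the coefficient of $a$ equals $\tfrac{1-\eta}{2}\alpha - \tfrac{\eta}{2}\alpha + \tfrac{1+2\eta}{2}\alpha = \alpha$, the coefficient of $x_\eta$ equals $-\tfrac{\eta}{2} + \tfrac{\eta(1+2\eta)}{2} = \eta^2$, and the coefficient of $x_{1/2}$ equals $-\tfrac{\eta}{2} + \tfrac{1+2\eta}{4} = \tfrac{1}{4}$. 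These match the expression for $a(ax)$ above, completing the proof.

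There is essentially no obstacle here: the lemma is a bookkeeping identity that encodes, in a basis-free form, the specific eigenvalues $1$, $\eta$, $1/2$ appearing in the $\mathcal{PC}(\eta)$ fusion law. The only subtlety worth flagging is the use of Proposition~\ref{p:form} to identify the $A_1(a)$-projection coefficient with the Frobenius-form pairing $(a,x)$; without this identification, one would still obtain the formula with $(a,x)$ replaced by $\varphi_a(x)$.
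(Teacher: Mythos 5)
Your proposal is correct and matches the paper's argument: the paper proves this lemma by noting it is "exactly the same as for the product $a(ab)$ in Proposition~\ref{p:2gen}," which is precisely your computation — decompose $x$ into Peirce components of $a$, apply $ad_a$ twice, and match coefficients, using Proposition~\ref{p:form} to identify $\varphi_a(x)$ with $(a,x)$. The only cosmetic difference is that you verify the stated formula by expanding both sides in the basis $a$, $x_\eta$, $x_{1/2}$, whereas the paper derives it by solving for $x_\eta$, $x_{1/2}$ in terms of $a$, $x$, $ax$ and substituting; the arithmetic is identical.
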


\begin{proof}
The proof is exactly the same as for the product $a(ab)$ in Proposition~\ref{p:2gen}.
\end{proof}

\begin{lem}\label{l:b_eta-c_eta}
Suppose that $a$ is a primitive axis in $A$ and $x,y\in A$.
Denote $\alpha=(a,x), \gamma=(a,y)$, $\beta=(x,y)$, and $\psi=(a,xy)$. Write $x=\alpha a+x_\eta+x_\frac{1}{2}$ and
$y=\gamma a+y_\eta+y_\frac{1}{2}$, where $x_\eta, y_\eta\in A_\eta(a)$ and $x_\frac{1}{2},y_\frac{1}{2}\in A_\frac{1}{2}(a)$.
Then
\begin{enumerate}[$(i)$]
\item $(x_\eta,y_\eta)=\frac{1}{1-2\eta}(\alpha\gamma+\beta-2\psi)$;
\item $(x_\frac{1}{2}, y_\frac{1}{2})=\frac{2}{1-2\eta}((\eta-1)\alpha\gamma-\eta\beta+\psi)$.
\end{enumerate}
\end{lem}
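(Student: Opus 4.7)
The key idea is to convert the two claimed identities into a $2\times 2$ linear system for the unknowns $(x_\eta,y_\eta)$ and $(x_{1/2},y_{1/2})$, and to read off that system from just two evaluations of the Frobenius form, namely $(x,y)$ and $(ax,y)$.

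First I would recall from the proof of Proposition~\ref{p:form} that the Peirce decomposition $A=A_1(a)\oplus A_\eta(a)\oplus A_{1/2}(a)$ is orthogonal with respect to the Frobenius form, and that $(a,a)=1$ since $a$ is a primitive axis. Consequently, expanding the form $(x,y)$ along the decompositions of $x$ and $y$ kills all cross terms and yields
\[
\beta \;=\; (x,y) \;=\; \alpha\gamma + (x_\eta,y_\eta) + (x_{1/2},y_{1/2}).
\]

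Next I would use the Frobenius identity $\psi=(a,xy)=(ax,y)$ together with the observation that $ax=\alpha a+\eta x_\eta+\tfrac{1}{2}x_{1/2}$ (which follows directly from $x=\alpha a+x_\eta+x_{1/2}$ and the eigenvalues of $\mathrm{ad}_a$). Expanding $(ax,y)$ by orthogonality gives
\[
\psi \;=\; \alpha\gamma + \eta(x_\eta,y_\eta) + \tfrac{1}{2}(x_{1/2},y_{1/2}).
\]

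The pair of equations just displayed forms a linear system in the unknowns $(x_\eta,y_\eta)$ and $(x_{1/2},y_{1/2})$ with determinant $\tfrac12-\eta\neq 0$. Solving it gives the stated formulas. The only thing to watch is the sign/normalization to match the factor $\frac{1}{1-2\eta}$; there are no conceptual obstacles since the hypothesis $\eta\neq\tfrac12$ guarantees invertibility of the system. Thus the proof reduces to two short bilinear-form expansions and one $2\times 2$ inversion.
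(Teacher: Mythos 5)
Your argument is correct, and it is a genuinely cleaner route than the one in the paper. The paper first derives the explicit expression $x_\eta=\frac{1}{1-2\eta}(\alpha a+x-2ax)$, expands $(1-2\eta)^2(x_\eta,y_\eta)$ as a nine-term bilinear sum, and must then invoke Lemma~\ref{l:a(ax)} to evaluate the term $(ax,ay)=\tfrac12(2a(ax),y)$ before solving for $(x_\eta,y_\eta)$; only afterwards does it use the orthogonal expansion of $(x,y)$ to deduce part $(ii)$ from part $(i)$. You instead extract both quantities at once from the $2\times2$ system given by the orthogonal expansions of $(x,y)$ and of $(ax,y)=(a,xy)=\psi$, which needs nothing beyond the orthogonality of the Peirce decomposition with respect to the Frobenius form, the normalization $(a,a)=1$, and the eigenvalue identity $ax=\alpha a+\eta x_\eta+\tfrac12 x_{1/2}$ --- all of which are already established in Proposition~\ref{p:form}, so there is no circularity. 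I checked the inversion: the determinant of your system is $\tfrac12-\eta\neq0$, and solving does yield $(x_\eta,y_\eta)=\frac{1}{1-2\eta}(\alpha\gamma+\beta-2\psi)$ and $(x_{1/2},y_{1/2})=\frac{2}{1-2\eta}\bigl((\eta-1)\alpha\gamma-\eta\beta+\psi\bigr)$ exactly as claimed. What your approach buys is brevity and independence from Lemma~\ref{l:a(ax)}; what the paper's longer computation buys is essentially nothing extra for this lemma, though the intermediate identity for $(ax,ay)$ it establishes is of the same flavor as computations reused later (e.g.\ in Lemmas~\ref{l:axay1} and~\ref{l:axay2}).
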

\begin{proof}
Clearly, $ax=\alpha a+\eta x_\eta+\frac{1}{2}x_\frac{1}{2}$ and hence $x_\eta=\frac{1}{1-2\eta}(\alpha a+x-2ax)$. Similarly, we find that $y_\eta=\frac{1}{1-2\eta}(\gamma a+y-2ay)$.
Therefore,
\begin{multline*}
(1-2\eta)^2(x_\eta,y_\eta)=(\alpha a+x-2ax, \gamma a+y-2ay)\\=
\alpha\gamma(a,a)+\alpha(a,y)-2\alpha(a,ay)+\gamma(x,a)+(x,y)-2(x,ay)-2\gamma(ax,a)-2(ax,y)+4(ax,ay)
\\=\alpha\gamma+\alpha\gamma-2\alpha\gamma+\alpha\gamma+\beta-2\psi-2\alpha\gamma-2\psi+4(ax,ay)=-\alpha\gamma+\beta-4\psi+4(ax,ay).
\end{multline*}
Using Lemma~\ref{l:a(ax)}, we see that
\begin{multline*}
2(ax,ay)=(2a(ax),y)=(\alpha(1-\eta)a-\eta x+(1+2\eta)ax,y)\\=\alpha(1-\eta)(a,y)-\eta(x,y)+(1+2\eta)(ax,y)=\alpha\gamma(1-\eta)-\eta\beta+(1+2\eta)\psi.
\end{multline*}
This implies that
$$(1-2\eta)^2(x_\eta,y_\eta)=(1-2\eta)\alpha\gamma+(1-2\eta)\beta+(4\eta-2)\psi,$$
and hence $(x_\eta,y_\eta)=\frac{1}{(1-2\eta)}(\alpha\gamma+\beta-2\psi)$, as required.

Since $\beta=(x,y)=(\alpha a+x_\eta+x_\frac{1}{2},\gamma a+y_\eta+y_\frac{1}{2})=\alpha\gamma+(x_\eta,y_\eta)+(x_\frac{1}{2},y_\frac{1}{2})$,
we find that
$$(x_\frac{1}{2},y_\frac{1}{2})=\beta-\alpha\gamma-\frac{1}{1-2\eta}(\alpha\gamma+\beta-2\psi)=\frac{1}{1-2\eta}(2(\eta-1)\alpha\gamma-2\eta\beta+2\psi),$$
as claimed.
\end{proof}

\begin{lem}\label{l:3prod} Suppose that $a$ is a primitive axis and  $x,y\in A$.
Denote $\alpha=(a,x)$, $\beta=(x,y)$, $\gamma=(a,y)$, and $\psi=(a,xy)$.
Then
$$a(xy)+x(ay)+y(ax)=\bigl((1+\eta)(\psi-\alpha\gamma)-\eta\beta\bigr)a-\gamma\eta x-\alpha\eta y+(1+\eta)(xy+\gamma ax+\alpha ay).$$
\end{lem}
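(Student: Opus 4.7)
My plan is to prove the identity by a direct Peirce decomposition with respect to $a$. I would write $x = \alpha a + x_\eta + x_{1/2}$ and $y = \gamma a + y_\eta + y_{1/2}$ with $x_\eta, y_\eta \in A_\eta(a)$ and $x_{1/2}, y_{1/2} \in A_{1/2}(a)$, and then expand $ax$, $ay$, $xy$, and afterwards $a(xy)$, $x(ay)$, $y(ax)$ using the $\mathcal{PC}(\eta)$ fusion law. The key structural observation is that each cross-term of Peirce components lands in a definite eigenspace: $x_\eta y_{1/2}$ and $x_{1/2} y_\eta$ lie in $A_{1/2}(a)$, $x_\eta y_\eta$ lies in $A_1(a)=\langle a\rangle$, and $x_{1/2} y_{1/2}$ splits into parts in $A_1(a)$ and $A_\eta(a)$.

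To organise the bookkeeping I would introduce abbreviations $p:=x_\eta y_{1/2}$, $q:=x_{1/2} y_\eta$, $x_\eta y_\eta =: \rho_1 a$, and $x_{1/2} y_{1/2} =: \rho_2 a + r$ with $r\in A_\eta(a)$. Each of $a(xy)$, $x(ay)$, $y(ax)$ then becomes an explicit linear combination of $a,x_\eta,y_\eta,x_{1/2},y_{1/2},p,q,r$, and on summing, the coefficients of $x_\eta, y_\eta, x_{1/2}, y_{1/2}, r, p, q$ come out to $\gamma\eta(2\eta+1)$, $\alpha\eta(2\eta+1)$, $\gamma$, $\alpha$, $1+\eta$, $1+\eta$, $1+\eta$ respectively. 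Expanding $-\gamma\eta x - \alpha\eta y + (1+\eta)(xy+\gamma\,ax+\alpha\,ay)$ via the same Peirce decomposition produces the identical coefficients term by term, so these parts of the identity follow purely from the fusion rules.

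The only genuine computation is the coefficient of $a$. Using the Frobenius property $(a,uv)=(au,v)$ and $(a,a)=1$ one extracts $\rho_1=\eta(x_\eta,y_\eta)$ and $\rho_2=\tfrac{1}{2}(x_{1/2},y_{1/2})$, and Lemma~\ref{l:b_eta-c_eta} then expresses both pairings explicitly in terms of $\alpha,\beta,\gamma,\psi$. The sum of $a$-coefficients of $a(xy)+x(ay)+y(ax)$ collects to $\psi+2\alpha\gamma+2\eta\rho_1+\rho_2$; substituting the formulas for $\rho_1,\rho_2$ and clearing the nonzero denominator $1-2\eta$ (using $\eta\ne\tfrac{1}{2}$) collapses this to $2(1+\eta)\psi+(1-\eta)\alpha\gamma-\eta\beta$, exactly matching the $a$-coefficient on the right-hand side. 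The main obstacle is thus the sheer number of Peirce components to track rather than any conceptual subtlety — the proof is essentially a careful bookkeeping exercise driven by the fusion rules and by Lemma~\ref{l:b_eta-c_eta}.
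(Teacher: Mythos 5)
Your proposal is correct and follows essentially the same route as the paper: both decompose $x$ and $y$ into Peirce components relative to $a$, expand the three products via the fusion law, and compute the $A_1(a)$-contribution from $x_\eta y_\eta$ and $x_{1/2}y_{1/2}$ using the Frobenius property together with Lemma~\ref{l:b_eta-c_eta}; your stated coefficients ($\gamma\eta(2\eta+1)$, $\alpha\eta(2\eta+1)$, $\gamma$, $\alpha$, $1+\eta$, $1+\eta$, $1+\eta$) and the final $a$-coefficient $2(1+\eta)\psi+(1-\eta)\alpha\gamma-\eta\beta$ all check out. The only difference is organizational: you match coefficients symbol by symbol, while the paper subtracts $(1+\eta)xy$, $\gamma\bigl((1+\eta)ax-\eta x\bigr)$, and $\alpha\bigl((1+\eta)ay-\eta y\bigr)$ and shows the remainder is a multiple of $a$.
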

\begin{proof}
Write $x=\alpha a+x_\eta+x_\frac{1}{2}$ and
$y=\gamma a+y_\eta+y_\frac{1}{2}$, where $x_\eta,x_\eta\in A_\eta(a)$ and $y_\frac{1}{2}, y_\frac{1}{2}\in A_\frac{1}{2}(a)$.
Then
\begin{multline*}
a(xy)=a\bigl((\alpha a+x_\eta+x_\frac{1}{2})(\gamma a+y_\eta+y_\frac{1}{2})\bigr)\\=a\bigl(\alpha\gamma a+\alpha\eta y_\eta+\frac{\alpha}{2} y_\frac{1}{2}+\gamma\eta x_\eta+x_\eta y_\eta+x_\eta y_\frac{1}{2}+\frac{\gamma}{2} x_\frac{1}{2}+x_\frac{1}{2}y_\eta+x_\frac{1}{2}y_\frac{1}{2}\bigr)\\=\alpha\gamma a+\alpha\eta^2 y_\eta+\frac{\alpha}{4} y_\frac{1}{2}+\gamma\eta^2 x_\eta+x_\eta y_\eta+\frac{1}{2}x_\eta y_\frac{1}{2}+\frac{\gamma}{4} x_\frac{1}{2}+\frac{1}{2}x_\frac{1}{2}y_\eta+a(x_\frac{1}{2}y_\frac{1}{2}).
\end{multline*}
Similarly, we see that
\begin{multline*}
x(ay)=(\alpha a+x_\eta+x_\frac{1}{2})(\gamma a+\eta y_\eta+\frac{1}{2}y_\frac{1}{2})\\=
\alpha\gamma a+\alpha\eta^2 y_\eta+\frac{\alpha}{4} y_\frac{1}{2}+\gamma\eta x_\eta+\eta x_\eta y_\eta+\frac{1}{2}x_\eta y_\frac{1}{2}+\frac{\gamma}{2}x_\frac{1}{2}+\eta x_\frac{1}{2}y_\eta+\frac{1}{2}x_\frac{1}{2}y_\frac{1}{2}.
\end{multline*}
By the symmetry of $x$ and $y$,
$$
y(ax)=\alpha\gamma a+\gamma\eta^2 x_\eta+\frac{\gamma}{4} x_\frac{1}{2}+\alpha\eta y_\eta+\eta x_\eta y_\eta+\frac{1}{2}y_\eta x_\frac{1}{2}+\frac{\alpha}{2} y_\frac{1}{2}+\eta y_\frac{1}{2}x_\eta+\frac{1}{2}x_\frac{1}{2}y_\frac{1}{2}.
$$
These equalities imply that
\begin{multline*}
a(xy)+x(ay)+y(ax)-(1+\eta)xy\\=\alpha\gamma(2-\eta) a+\alpha\eta^2 y_\eta+\gamma\eta^2 x_\eta+\alpha\frac{1-\eta}{2}y_\frac{1}{2}+\gamma\frac{1-\eta}{2} x_\frac{1}{2}+\eta x_{\eta}y_{\eta}+a(x_\frac{1}{2}y_\frac{1}{2})-\eta x_\frac{1}{2}y_\frac{1}{2}.
\end{multline*}
Since $x_\eta y_\eta\in A_1(a)$, we have
$x_\eta y_\eta=(a,x_\eta y_\eta) a=(ax_\eta,y_\eta) a=\eta(x_\eta,y_\eta) a$. Applying Lemma~\ref{l:b_eta-c_eta},
we find that $\eta x_\eta y_\eta=\frac{\eta^2}{1-2\eta}(\alpha\gamma+\beta-2\psi) a$.

Since $x_\frac{1}{2}y_\frac{1}{2}\in A_1(a)\oplus A_\eta(a)$, we have
$$a(x_\frac{1}{2}y_\frac{1}{2})-\eta x_\frac{1}{2}y_\frac{1}{2}=(1-\eta)(a,x_\frac{1}{2}y_\frac{1}{2}) a=
(1-\eta)(ax_\frac{1}{2},y_\frac{1}{2})a=\frac{1-\eta}{2}(x_\frac{1}{2},y_\frac{1}{2})a.$$
 Applying Lemma~\ref{l:b_eta-c_eta},
we see that $a(x_\frac{1}{2}y_\frac{1}{2})-\eta x_\frac{1}{2}y_\frac{1}{2}=\frac{1-\eta}{1-2\eta}((\eta-1)\alpha\gamma-\eta\beta+\psi) a$.

Note that
\begin{multline*}
(1+\eta) ax-\eta x=(1+\eta)\alpha a+(1+\eta)\eta x_\eta+\frac{1+\eta}{2}x_\frac{1}{2}-\alpha\eta a-\eta x_\eta-\eta x_\frac{1}{2}\\=\alpha a+\eta^2 x_{\eta}+\frac{1-\eta}{2} x_\frac{1}{2}.
\end{multline*}

Similarly, $(1+\eta) ay-\eta y=\gamma a+\eta^2 y_{\eta}+\frac{1-\eta}{2} y_\frac{1}{2}$. Using these equalities, we find that
\begin{multline*}
a(xy)+x(ay)+y(ax)-(1+\eta) xy-\gamma((1+\eta) ax-\eta x)-\alpha((1+\eta) ay-\eta y)\\=\alpha\gamma(2-\eta) a-2\alpha\gamma a+\eta x_{\eta}y_{\eta}+a(x_\frac{1}{2}y_\frac{1}{2})-\eta x_\frac{1}{2}y_\frac{1}{2}\\=
\bigl(-\alpha\gamma\eta+\frac{\eta^2}{1-2\eta}(\alpha\gamma+\beta-2\psi)+ \frac{1-\eta}{1-2\eta}((\eta-1)\alpha\gamma-\eta\beta+\psi)\bigr) a\\=\frac{1}{1-2\eta}\bigl(-\alpha\gamma\eta(1-2\eta)+\eta^2(\alpha\gamma+\beta-2\psi)+(1-\eta)((\eta-1)\alpha\gamma-\eta\beta+\psi)\bigr)a\\=
\frac{1}{1-2\eta}\bigl( (2\eta^2+\eta-1)\alpha\gamma+(2\eta^2-\eta)\beta+(1-\eta-2\eta^2)\psi\bigr)a=\bigl((1+\eta)(\psi-\alpha\gamma)-\eta\beta\bigr)a.
\end{multline*}
Therefore,
$$a(xy)+x(ay)+y(ax)=\bigl((1+\eta)(\psi-\alpha\gamma)-\eta\beta\bigr) a-\gamma\eta x-\alpha\eta y+(1+\eta)(xy+\gamma ax+\alpha ay).
$$
\end{proof}

\begin{cor}\label{c:3prod} Suppose that $a$ is a primitive axis in $A$ and $x,y\in A$. The following statements hold.
\begin{enumerate}[$(i)$]
\item If $\eta=-1$, then $a(xy)+x(ay)+y(ax)=(x,y) a+(a,y) x+(a,x) y.$
\item If $\eta\neq-1$, then $$a(xy)+x(ay)+y(ax)=
(1+\eta)\bigl(w(x)ay+w(y)ax+xy\bigr)-\eta\bigl(w(xy)a+w(y)x+w(x)y\bigr),$$
where $w(x)=(a,x)$, $w(y)=(a,y)$, and $w(xy)=(a,xy)$.
\end{enumerate}
\end{cor}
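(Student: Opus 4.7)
The statement is a direct corollary of Lemma~\ref{l:3prod}, which already expresses $a(xy)+x(ay)+y(ax)$ as an explicit linear combination of $a$, $x$, $y$, $ax$, $ay$, $xy$ with coefficients in $\alpha=(a,x)$, $\gamma=(a,y)$, $\beta=(x,y)$, $\psi=(a,xy)$, and $\eta$. The plan is therefore to simplify this expression in each of the two cases, and no further algebraic manipulation of products in $A$ is needed.

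For part (i), I would simply substitute $\eta=-1$ into Lemma~\ref{l:3prod}. The three terms carrying the factor $(1+\eta)$ vanish, and the surviving coefficients become $-\eta\beta=\beta=(x,y)$, $-\gamma\eta=\gamma=(a,y)$, and $-\alpha\eta=\alpha=(a,x)$. This reproduces the claimed identity $a(xy)+x(ay)+y(ax)=(x,y)a+(a,y)x+(a,x)y$ term by term.

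For part (ii), I would invoke Proposition~\ref{p:train}, which applies precisely in the regime $\eta\neq-1$. Two of its consequences do the job: the linear functional $w(z):=(a,z)$ is an $\mathbb{F}$-algebra homomorphism, and $(x,y)=w(xy)$ for all $x,y\in A$. In particular $\alpha=w(x)$, $\gamma=w(y)$, and both $\beta=(x,y)$ and $\psi=(a,xy)$ equal $w(xy)$, while the homomorphism property gives $w(xy)=w(x)w(y)=\alpha\gamma$. Hence $\psi-\alpha\gamma=0$ and the entire $(1+\eta)(\psi-\alpha\gamma)a$ summand in the lemma drops out. Rewriting the remaining contributions in the notation $w(x),w(y),w(xy)$ and grouping the terms with a common factor $(1+\eta)$ or $-\eta$ yields exactly the right-hand side displayed in (ii).

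There is no substantive obstacle: the combinatorics were already absorbed into Lemma~\ref{l:3prod}, and this corollary reduces to a one-line substitution in each case. The only piece of care is to record that the identifications $\beta=\psi$ and $\psi=\alpha\gamma$ are available only because Proposition~\ref{p:train} is in force, which is why the hypothesis $\eta\neq-1$ is needed for (ii) but not for (i).
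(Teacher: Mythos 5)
Your proposal is correct and follows exactly the paper's route: part (i) is immediate substitution of $\eta=-1$ into Lemma~\ref{l:3prod}, and part (ii) uses the identities $w(xy)=w(x)w(y)=(x,y)$ from Proposition~\ref{p:train} to collapse the coefficient of $a$ to $-\eta w(xy)$. Your write-up is in fact slightly more explicit than the paper's two-line proof, and correctly isolates why the hypothesis $\eta\neq-1$ is needed only in case (ii).
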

\begin{proof}
If $\eta=-1$, the assertion follows from Lemma~\ref{l:3prod}. If $\eta\neq-1$, then Proposition~\ref{p:train} implies that $w(xy)=w(x)w(y)=(x,y)$. Applying these equalities and Lemma~\ref{l:3prod}, we get the identity for this case.
\end{proof}

\begin{lem}\label{l:eta=-1}
Suppose that $\eta=-1$. If $x,y,z\in A$, then
$x(yz)+y(zx)+z(xy)=(y,z)x+(x,z)y+(y,z)z$. In particular, $x^3=(x,x)x$ and hence $A$ is a pseudo-composition algebra.
\end{lem}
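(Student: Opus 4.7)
The plan is to reduce the identity to a case already handled by Corollary~\ref{c:3prod}(i). Both sides of
\[
x(yz)+y(zx)+z(xy) = (y,z)x+(x,z)y+(x,y)z
\]
are $\mathbb{F}$-trilinear in $(x,y,z)$ (the left side by bilinearity of multiplication, the right side by bilinearity of the form). Moreover, the identity is symmetric under permutations of $x$, $y$, $z$, so it is enough to verify it whenever at least one of the three arguments is a primitive axis. By Proposition~\ref{p:basis}, $A$ is spanned by primitive axes, so trilinear extension then forces the identity on all of $A$.

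For the base case, take $z=a$ a primitive axis. Then the desired equality reads
\[
a(xy)+x(ay)+y(ax) = (a,y)x+(a,x)y+(x,y)a,
\]
which is exactly Corollary~\ref{c:3prod}(i) (using symmetry $(x,a)=(a,x)$ and $(y,a)=(a,y)$). So the identity holds whenever one argument is a primitive axis, and the linearity argument above extends it to arbitrary $x,y,z\in A$.

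For the ``in particular'' clause, specialize to $x=y=z$. The left side becomes $3x(x\cdot x) = 3x^3$ by commutativity, and the right side becomes $3(x,x)x$. Since we are in the case $\eta=-1$, the running assumption of this section forces $\operatorname{char}\mathbb{F}\neq 3$, so we may divide by $3$ to obtain $x^3=(x,x)x$. By the definition of a pseudo-composition algebra and the fact that the Frobenius form $(\cdot,\cdot)$ is non-zero symmetric bilinear (Proposition~\ref{p:form}), this shows that $A$ is a pseudo-composition algebra.

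No serious obstacle is expected: all the analytic work was absorbed into Lemma~\ref{l:3prod} and its corollary, and here we only need to exploit the symmetry of the trilinear identity plus the spanning property from Proposition~\ref{p:basis}. The only mild point worth noting is the trilinearity of the right-hand side, which depends on the bilinearity of the Frobenius form --- this is immediate from Proposition~\ref{p:form}.
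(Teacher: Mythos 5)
Your proof is correct and follows essentially the same route as the paper: the base case is Corollary~\ref{c:3prod}(i) with one argument a primitive axis, the extension to arbitrary arguments uses linearity together with the spanning property from Proposition~\ref{p:basis}, and the specialization $x=y=z$ with division by $3$ (legitimate since $\eta=-1\neq\frac12$ forces $\operatorname{char}\mathbb{F}\neq3$) gives $x^3=(x,x)x$. The only difference is cosmetic: you invoke the symmetry of the identity to place the axis in the third slot, whereas the paper expands the first argument directly; you also silently correct the typo in the statement, whose last term should read $(x,y)z$ rather than $(y,z)z$.
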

\begin{proof}
By Proposition~\ref{p:basis}, $A$ is the span of some set of axes $\{a_i\}_{i\in I}$. Write $x=\sum\limits_{i=1}^n\lambda_i\cdot a_i$,
where $\lambda_i\in\mathbb{F}$ for $1\leq i\leq n$.
By Corollary~\ref{c:3prod}, we have
$$a_i(yz)+y(a_iz)+z(a_iy)=(y,z)a_i+(a_i,z)y+(a_i,y)z.$$
Multiplying both sides by $\lambda_i$, we get
$$(\lambda_ia_i)(yz)+y((\lambda_ia_i)z)+z((\lambda_ia_iy)=(y,z)(\lambda_ia_i)+(\lambda_ia_i,z)y+(\lambda_ia_i,y)z.$$
Summing up the equalities for all $i$, we obtain the required equality $x(yz)+y(zx)+z(xy)=(y,z)x+(x,z)y+(y,z)z$.
Finally, if $x=y=z$, then we have $3x^3=3(x,x)x$ and hence
$x^3=(x,x)x$ since $\operatorname{char}\mathbb{F}\neq3$.
\end{proof}

\begin{lem}
Suppose that $\eta\neq-1$.
Define the $\mathbb{F}$-algebra homomorphism from Proposition~\ref{p:train} by $w$.
Then for all $x,y,z\in A$ it is true that $$x(yz)+y(xz)+z(yx)=(\eta+1)\bigl(w(z)xy+w(x)yz+w(y)xz\bigr)-
\eta\bigl(w(yz)x+w(zx)y+w(xy)z\bigr).$$ In particular, if $\operatorname{char}\mathbb{F}\neq3$, then
$x^3=(\eta+1)w(x)x^2-\eta w(x)^2x$ and hence $A$ is a train algebra of rank $3$.
\end{lem}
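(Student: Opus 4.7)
The plan is to mimic the argument of Lemma~\ref{l:eta=-1}: extend the identity of Corollary~\ref{c:3prod}(ii), which holds when one of the three arguments is a primitive axis, to arbitrary elements of $A$ by linearity, invoking the existence of a spanning set of primitive axes guaranteed by Proposition~\ref{p:basis}.

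Concretely, I would first rewrite Corollary~\ref{c:3prod}(ii) with the axis playing the role of the variable $x$ in the target identity: for a primitive axis $a$,
\[
a(yz)+y(az)+z(ay)=(1+\eta)\bigl(w(y)az+w(z)ay+yz\bigr)-\eta\bigl(w(yz)a+w(z)y+w(y)z\bigr).
\]
Since $w(a)=(a,a)=1$ and $w$ is an algebra homomorphism (Proposition~\ref{p:train}), one has $w(za)=w(z)$ and $w(ay)=w(y)$, so this is precisely the target identity with $x$ specialized to $a$. Now given an arbitrary $x\in A$, write $x=\sum_i\lambda_i a_i$ with the $a_i$ primitive axes (Proposition~\ref{p:basis}), multiply the identity above for each $a_i$ by $\lambda_i$, and sum over $i$. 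The left-hand side assembles to $x(yz)+y(xz)+z(yx)$ by bilinearity. The right-hand side terms that explicitly contain $a_i$ assemble into $w(y)xz+w(z)xy+w(yz)x$. The three terms $yz$, $w(z)y$, $w(y)z$, which do not explicitly involve $a_i$, acquire the prefactor $\sum_i\lambda_i$; this prefactor equals $w(x)$, because $w$ is linear and $w(a_i)=1$ for every primitive axis. Then the multiplicativity of $w$ rewrites $w(x)w(z)$ and $w(x)w(y)$ as $w(zx)$ and $w(xy)$, yielding exactly the target identity.

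The train identity then follows by specializing $x=y=z$: commutativity gives $3x^3$ on the left, while the multiplicativity of $w$ gives $w(x^2)=w(x)^2$ on the right, so the right-hand side collapses to $3(\eta+1)w(x)x^2-3\eta\, w(x)^2 x$. Dividing by $3$ (valid since $\operatorname{char}\mathbb{F}\neq 3$) yields $x^3=(\eta+1)w(x)x^2-\eta\,w(x)^2 x$, i.e.\ the rank-$3$ train identity with weight $\omega=w$ and coefficients $\lambda_1=-(\eta+1)$, $\lambda_2=\eta$. No serious obstacle is expected beyond the bookkeeping just described; the delicate point is that the three terms on the right-hand side of Corollary~\ref{c:3prod}(ii) that lack an explicit $a$ do \emph{not} vanish under the linear extension, but rather pick up the coefficient $w(x)$ and, after invoking multiplicativity of $w$, reproduce precisely the remaining pieces of the target identity.
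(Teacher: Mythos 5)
Your proposal is correct and follows essentially the same route as the paper: specialize Corollary~\ref{c:3prod}(ii) to a primitive axis, use $w(a)=1$ and multiplicativity of $w$ to recognize it as the target identity with $x=a$, extend by linearity over a spanning set of axes from Proposition~\ref{p:basis}, and set $x=y=z$ to get the train identity after dividing by $3$. The only cosmetic difference is that the paper rewrites the right-hand side so every term is explicitly linear in $a_i$ before summing, whereas you sum first and then convert the accumulated factor $\sum_i\lambda_i$ into $w(x)$; these are equivalent.
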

\begin{proof}
By Proposition~\ref{p:basis}, $A$ is the span of some set of axes $\{a_i\}_{i\in I}$. Write $x=\sum\limits_{i=1}^n\lambda_i\cdot a_i$,
where $\lambda_i\in\mathbb{F}$ for $1\leq i\leq n$.
By Corollary~\ref{c:3prod}, we have
\begin{multline*}
a_i(yz)+y(a_iz)+z(ya_i)\\=
(1+\eta)\bigl(w(a_i)yz+w(y)a_iz+w(z)a_iy\bigr)-\eta\bigl(w(yz)a_i+w(a_iz)y+w(a_iy)z\bigr).
\end{multline*}
Multiplying both sides by $\lambda_i$, we find that
\begin{multline*}
(\lambda_ia_i)(yz)+y((\lambda_ia_i)z)+z(y(\lambda_ia_i))=
(1+\eta)\bigl(w(\lambda_ia_i)yz+w(y)(\lambda_ia_i)z+w(z)(\lambda_ia_i)y\bigr)\\-\eta\bigl(w(yz)(\lambda_ia_i)+w((\lambda_ia_i)z)y+w((\lambda_ia_i)y)z\bigr).
\end{multline*}

Summing up the equalities for all $i$, we get the required equality
$$x(yz)+y(xz)+z(yx)=(\eta+1)\bigl(w(x)yz+w(y)xz+w(z)xy\bigr)-\eta\bigl(w(yz)x+w(xz)y+w(xy)z\bigr).$$
If $x=y=z$ and $\operatorname{char}\mathbb{F}\neq3$, then we have
$3x^3=3(\eta+1)w(x)x^2-3\eta w(x)^2x$ and hence
$x^3=(\eta+1)w(x)x^2-\eta w(x)^2x$, as claimed.
\end{proof}

\section{$\mathcal{PC}(\eta)$-axial algebras generated by three primitive axes}
In this section, we prove Theorem~\ref{th:2}.
Throughout, we suppose that $\mathbb{F}$ is a field of characteristic not two and $A$ is a primitive $\mathcal{PC}(\eta)$-axial algebra over $\mathbb{F}$, where $\eta\in\mathbb{F}\setminus\{1,\frac{1}{2}\}$. We denote by $(\cdot,\cdot)$ the Frobenius form on $A$ from Proposition~\ref{p:form}. It follows from Proposition~\ref{p:train} that if $\eta\neq-1$, then there exists an $\mathbb{F}$-algebra homomorphism $w:A\rightarrow\mathbb{F}$ such that $w(xy)=(x,y)$ for all $x,y\in A$.

\begin{lem}\label{l:axay1}
Suppose that $a$ is a primitive axis in $A$ and $x,y\in A$.
If $\eta=-1$, then
\begin{multline*}
(ax)(ay)=\frac{1}{4}\bigl((6(a,xy)-3(x,y))a+(a,y)x+(a,x)y\\-2(a,y)ax-2(a,x)ay-xy+2x(ay)+2y(ax)\bigr).
\end{multline*}
\end{lem}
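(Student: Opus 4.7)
The plan is to derive the formula purely from Corollary~\ref{c:3prod}(i) and Lemma~\ref{l:a(ax)}, exploiting the Frobenius property, without any component-by-component eigenspace decomposition. I would first apply Corollary~\ref{c:3prod}(i) with the substitution $(x,y)\mapsto(ax,y)$, producing
$$a((ax)y)+(ax)(ay)+y(a(ax))=(ax,y)\,a+(a,y)(ax)+(a,ax)\,y,$$
and then with $(x,y)\mapsto(x,ay)$, producing a companion identity with $(ax)(ay)$ again on the left. The Frobenius property together with $a^2=a$ collapses the scalar coefficients to $\psi=(a,xy)$, $\alpha=(a,x)$, $\gamma=(a,y)$, since $(ax,y)=(a,xy)=\psi$ and $(a,ax)=(a,x)=\alpha$, and symmetrically on the second identity.

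Next, the terms $y(a(ax))$ and $x(a(ay))$ are rewritten via Lemma~\ref{l:a(ax)} specialized to $\eta=-1$, which gives $a(ax)=\alpha a+\tfrac12 x-\tfrac12 ax$ and hence $y(a(ax))=\alpha\,ay+\tfrac12 xy-\tfrac12 y(ax)$, and symmetrically for $x(a(ay))$. Adding the two resulting identities and isolating $2(ax)(ay)$ leaves essentially only one unknown combination, namely $a((ax)y)+a(x(ay))$.

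This residual sum is handled by the observation that, by commutativity and linearity, $a((ax)y)+a(x(ay))=a\bigl(y(ax)+x(ay)\bigr)$, and Corollary~\ref{c:3prod}(i) itself gives $y(ax)+x(ay)=\beta a+\gamma x+\alpha y-a(xy)$ with $\beta=(x,y)$. Consequently
$$a((ax)y)+a(x(ay))=\beta a+\gamma\,ax+\alpha\,ay-a(a(xy)),$$
and a final application of Lemma~\ref{l:a(ax)} to the element $xy$ yields $a(a(xy))=\psi a+\tfrac12 xy-\tfrac12 a(xy)$. One more substitution of $a(xy)=\beta a+\gamma x+\alpha y-x(ay)-y(ax)$ via Corollary~\ref{c:3prod}(i) eliminates every remaining auxiliary term, and collecting coefficients reproduces the stated formula exactly. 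The main obstacle is not conceptual but purely arithmetic: three successive substitutions compound the scalar weights on $a$, $x$, $y$, $xy$, $ax$, $ay$, $x(ay)$, and $y(ax)$, so careful bookkeeping is required; however, no case distinction, no eigenspace splitting, and no appeal to the spanning set of primitive axes are needed.
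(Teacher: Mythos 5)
Your proof is correct --- I traced the three substitutions through and the coefficients do collapse to exactly the stated formula --- but it takes a genuinely different route from the paper's. The paper works inside the eigenspace decomposition: it writes $x=\alpha a+x_{-1}+x_{\frac12}$ and $y$ likewise, invokes the fusion rule $A_{-1}(a)A_{-1}(a)\subseteq A_1(a)$ to get $x_{-1}y_{-1}=-(x_{-1},y_{-1})a$, evaluates $(x_{-1},y_{-1})$ via Lemma~\ref{l:b_eta-c_eta}, and then solves for $(ax)(ay)$ from the expansion of $x_{-1}y_{-1}$ in terms of $x-2ax+\alpha a$ and $y-2ay+\gamma a$. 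You instead polarize the already-established three-term identity of Corollary~\ref{c:3prod}(i) at the pairs $(ax,y)$ and $(x,ay)$, use the Frobenius property to identify $(ax,y)=(x,ay)=(a,xy)$ and $(a,ax)=(a,x)$, and close the resulting system with Lemma~\ref{l:a(ax)} applied to $x$, $y$, and $xy$. Since Corollary~\ref{c:3prod} is proved in Section~3, well before this lemma, there is no circularity. Your route buys a proof that never touches the fusion law or the eigencomponents directly --- everything comes from global identities already in hand --- and it adapts verbatim to the $\eta\neq-1$ case using Corollary~\ref{c:3prod}(ii), mirroring how the paper disposes of Lemma~\ref{l:axay2} by saying the proof is ``similar.'' What the paper's computation buys in exchange is self-containedness at the level of the axioms: it is essentially the same eigenspace calculation that produced Corollary~\ref{c:3prod} in the first place, so it does not presuppose that corollary. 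Both are valid; yours is arguably the cleaner derivation given the tools available at this point in the paper.
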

\begin{proof}
Write $x=\alpha a+x_{-1}+x_\frac{1}{2}$ and
$y=\beta a+y_{-1}+y_\frac{1}{2}$, where $\alpha=(a,x),\beta=(a,y)$, $x_{-1},y_{-1}\in A_{-1}(a)$, and $x_\frac{1}{2},y_\frac{1}{2}\in A_\frac{1}{2}(a)$.
By the fusion laws, we have
$x_{-1}y_{-1}=-(x_{-1}, y_{-1})a$. Since $x_{-1}=\frac{1}{3}(x-2ax+\alpha a)$ and $y_{-1}=\frac{1}{3}(y-2ay+\beta a)$,
we may express $(ax)(ay)$ using the product $x_{-1}y_{-1}$:
$$(ax)(ay)=\frac{1}{4}\bigl(-9(x_{-1}, y_{-1})a-(x+\alpha a)(y-2ay+\beta a)+2(ax)(y+\beta a)\bigr).$$
By Lemma~\ref{l:b_eta-c_eta}, we know that $9(x_{-1}, y_{-1})=3(x,y)-6(a,xy)+3\alpha\beta.$
Using Lemma~\ref{l:a(ax)}, we see that
\begin{multline*}
(x+\alpha a)(y-2ay+\beta a)=xy-2x(ay)+\beta ax+\alpha ay-2\alpha a(ay)+\alpha\beta a\\=\alpha\beta a+\beta ax+\alpha ay + xy-2x(ay)-2\alpha(\beta a+\frac{1}{2}y-\frac{1}{2}ay)\\=-\alpha\beta a+\beta ax-\alpha y+2\alpha ay+xy-2x(ay).
\end{multline*}
Finally, we find that
$$2(ax)(y+\beta a)=2y(ax)+2\beta(\alpha a+\frac{1}{2}x-\frac{1}{2}ax)=2\alpha\beta a+\beta x-\beta ax+2y(ax).$$
Combining calculations, we infer that
$$(ax)(ay)=\frac{1}{4}\bigl((6(a,xy)-3(x,y))a+\beta x+\alpha y-xy-2\beta ax-2\alpha ay+2x(ay)+2y(ax)\bigr),$$
as claimed.
\end{proof}

\begin{lem}\label{l:axay2}
Suppose that $a$ is a primitive axis in $A$ and $x,y\in A$.
If $\eta\neq-1$, then
\begin{multline*}
(ax)(ay)=\frac{1}{4}\bigl((1-2\eta)w(xy)a-\eta w(y)x-\eta w(x)y\\+2\eta w(y)ax+2\eta w(x)ay-xy+2x(ay)+2y(ax)\bigr).
\end{multline*}

\end{lem}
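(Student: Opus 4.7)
The plan is to mimic the strategy used in Lemma~\ref{l:axay1}, but to exploit the stronger fusion rule $A_\eta(a)^2=\{0\}$ provided by Corollary~\ref{cor:fusion-restricion}, which is available precisely because $\eta\neq-1$. Decompose $x=\alpha a+x_\eta+x_{1/2}$ and $y=\gamma a+y_\eta+y_{1/2}$, where $\alpha=(a,x)=w(x)$ and $\gamma=(a,y)=w(y)$. By the proof of Proposition~\ref{p:2gen} we have
\[
x_\eta=\frac{1}{1-2\eta}(\alpha a+x-2ax),\qquad y_\eta=\frac{1}{1-2\eta}(\gamma a+y-2ay).
\]

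Then I would invoke Corollary~\ref{cor:fusion-restricion} to assert $x_\eta y_\eta=0$, which becomes the single scalar identity
\[
(\alpha a+x-2ax)(\gamma a+y-2ay)=0.
\]
Expanding the left-hand side by distributivity produces, besides the desired term $4(ax)(ay)$, contributions of the form $\alpha a(ay)$ and $\gamma a(ax)$; these are resolved in closed form by Lemma~\ref{l:a(ax)} (applied to the pairs $(a,y)$ and $(a,x)$), which rewrites $a(ax)$ and $a(ay)$ in terms of $a$, $x$, $y$, $ax$, $ay$ only. After substitution, every term lies in the span of $a$, $x$, $y$, $ax$, $ay$, $xy$, $x(ay)$, $y(ax)$, and $(ax)(ay)$, so one simply solves for $(ax)(ay)$.

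The final cosmetic step is to identify the coefficient of $a$ in closed form: the expansion yields $\alpha\gamma(1-2\eta)a$, and since $\eta\neq-1$ Proposition~\ref{p:train} gives $\alpha\gamma=w(x)w(y)=w(xy)$, so this coefficient becomes $(1-2\eta)w(xy)$, matching the target formula. I expect no conceptual obstacle here; the only difficulty is careful bookkeeping in the expansion of the quadratic product and the sign tracking when reducing $a(ax)$ and $a(ay)$. The proof is noticeably shorter than Lemma~\ref{l:axay1} because the summand corresponding to $x_\eta y_\eta$ vanishes in the present case, so neither Lemma~\ref{l:b_eta-c_eta} nor the analysis of $A_{1/2}(a)^2$ is needed.
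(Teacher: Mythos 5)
Your argument is correct and is essentially the proof the paper intends (the paper's own proof of this lemma is literally ``similar to that of Lemma~\ref{l:axay1}''): same decomposition $x=\alpha a+x_\eta+x_{1/2}$, same isolation of the product $x_\eta y_\eta$ via $x_\eta=\frac{1}{1-2\eta}(\alpha a+x-2ax)$, and same reduction of $a(ax)$, $a(ay)$ by Lemma~\ref{l:a(ax)}; I checked the expansion and it yields exactly the stated coefficients, with $\alpha\gamma(1-2\eta)=(1-2\eta)w(xy)$ by Proposition~\ref{p:train}. Your only deviation is to invoke Corollary~\ref{cor:fusion-restricion} to get $x_\eta y_\eta=0$ outright instead of computing $(x_\eta,y_\eta)$ via Lemma~\ref{l:b_eta-c_eta} and observing it vanishes when $\eta\neq-1$; that is a harmless, slightly cleaner streamlining rather than a different route.
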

\begin{proof}
The proof is similar to that of Lemma~\ref{l:axay1}.
\end{proof}

\begin{table}
\begin{center}
\begingroup
\setlength{\tabcolsep}{20pt}
\renewcommand{\arraystretch}{1.2}
{\small
$\begin{tabu}[h!]{|c|}
\hline
a\cdot a = a, b\cdot b=b, c\cdot c=c, a\cdot b = ab, a\cdot c=ac, b\cdot c=bc, \\ \hline
a\cdot bc = a(bc), b\cdot ac = b(ac), c\cdot ab =\beta a+\gamma b+\alpha c-a(bc)-b(ac),  \\ \hline
\begin{tabu}{@{}c@{}}
a\cdot ab = \alpha a+\frac{1}{2}b-\frac{1}{2}{ab}, a\cdot ac = \gamma a+\frac{1}{2}c-\frac{1}{2}{ac}, b\cdot ab = \alpha b+\frac{1}{2}a-\frac{1}{2}{ab}, \\
b\cdot bc = \beta b+\frac{1}{2}c-\frac{1}{2}{bc}, c\cdot ac =\gamma c+\frac{1}{2}a-\frac{1}{2}{ac}, c\cdot bc = \beta c+\frac{1}{2}b-\frac{1}{2}{bc},
\end{tabu}\\ \hline
a\cdot(a(bc))=\psi a+\frac{1}{2}bc-\frac{1}{2}a(bc),
b\cdot(b(ac))=\psi b+\frac{1}{2}ac-\frac{1}{2}b(ac), \\ \hline
\begin{tabu}{@{}c@{}}
b\cdot a(bc)=\frac{1}{4}( \beta a+(\gamma-2\psi)b-3\alpha c-2\beta ab+ 6\alpha bc-ac+2a(bc)+2b(ac)), \\
c\cdot a(bc)=\frac{1}{4}(3\beta a-\gamma b+(3\alpha-2\psi)c-ab+6\gamma bc-2\beta ac-2b(ac)), \\
a\cdot b(ac)=\frac{1}{4}( (\beta-2\psi)a+\gamma b-3\alpha c-2\gamma ab -bc+6\alpha ac+2a(bc)+2b(ac) ), \\
c\cdot b(ac)=\frac{1}{4}(-\beta a+3\gamma b+(3\alpha-2\psi)c-ab-2\gamma bc+6\beta ac-2a(bc)),
\end{tabu} \\ \hline
\begin{tabu}{@{}c@{}}
(ab)^2=\frac{1}{4}((6\alpha-1)(a+b)-(4\alpha+2)ab),
(bc)^2=\frac{1}{4}((6\beta-1)(b+c)-(4\beta+2)bc), \\
(ac)^2=\frac{1}{4}((6\gamma-1)(a+c)-(4\gamma+2)ac),
\end{tabu}\\ \hline
\begin{tabu}{@{}c@{}}
ab\cdot bc=\frac{1}{4}(3\beta a+(6\psi-\gamma)b+3\alpha c-2\beta ab-2\alpha bc-ac-2b(ac)), \\
bc\cdot ac=\frac{1}{4}(\beta a+\gamma b+(6\psi-3\alpha)c-ab-2\gamma bc-2\beta ac + 2a(bc)+2b(ac)), \\
ab\cdot ac=\frac{1}{4}( (6\psi-\beta)a+3\gamma b+3\alpha c-2\gamma ab -bc-2\alpha ac-2a(bc) ),
\end{tabu}\\ \hline
\begin{tabu}{@{}c@{}}
ab\cdot a(bc)=\frac{1}{8}\bigl((12\alpha\beta-2\beta+6\gamma-6\psi)a+(6\psi-2\beta)b-c-4(\beta+\psi)ab\\+(6\alpha+1)bc-2ac+(2-4\alpha)a(bc)\bigr), \\
ab\cdot b(ac)=\frac{1}{8}\bigl((6\psi-2\gamma)a+(12\alpha\gamma+6\beta-2\gamma-6\psi)b-c-4(\gamma+\psi)ab-2bc\\+(6\alpha+1)ac+(2-4\alpha)b(ac)\bigr), \\
bc\cdot a(bc)=\frac{1}{8}\bigl( (4\beta^2-2\beta+2)a+(1-6\beta)ab+8\psi bc\\+(1-6\beta)ac+(4\beta+2)a(bc) \bigr), \\
bc\cdot b(ac)=\frac{1}{8}\bigl(-a+(12\beta\gamma+6\alpha-2\gamma-6\psi)b+ (6\psi-2\gamma)c-2ab-4(\gamma+\psi)bc\\+(6\beta+1)ac+(2-4\beta)b(ac)\bigr), \\
ac\cdot a(bc)=\frac{1}{8}\bigl( (12\beta\gamma+6\alpha-2\beta-6\psi)a-b+(6\psi-2\beta)c-2ab\\+(6\gamma+1)bc-4(\beta+\psi)ac+(2-4\gamma)a(bc)\bigr), \\
ac\cdot b(ac)=\frac{1}{8}\bigl( (4\gamma^2-2\gamma+2)b+(1-6\gamma)ab+ (1-6\gamma)bc\\+8\psi ac +(4\gamma+2)b(ac) \bigr),
\end{tabu}\\ \hline
\begin{tabu}{@{}c@{}}
(a(bc))^2=\frac{1}{16}\bigl(
(36\alpha\beta-4\beta^2+36\beta\gamma-24\beta\psi-6\alpha+2\beta-6\gamma-12\psi-2)a+(1-6\beta)(b+c)\\+2(1-6\beta)(ab+ac)+(4\beta+24\psi+2)bc+(8\beta-16\psi+4)a(bc) \bigr), \\
(b(ac))^2=\frac{1}{16}\bigl(
(1-6\gamma)(a+c)+(36\alpha\gamma+36\beta\gamma-4\gamma^2-24\gamma\psi-6\alpha-6\beta+2\gamma-12\psi-2)b \\
+(2-12\gamma)(ab+bc)+(4\gamma+24\psi+2)ac+(8\gamma-16\psi+4)b(ac)\bigr), \\
a(bc)\cdot b(ac)=\frac{1}{16}\bigl(
(12\beta\psi-6\alpha\beta-2\beta^2-14\beta\gamma+\beta+6\gamma+6\psi+1)a\\+(12\gamma\psi-6\alpha\gamma-14\beta\gamma-2\gamma^2+6\beta+\gamma+6\psi+1)b\\+(18\alpha^2+6\alpha\beta+6\alpha\gamma-36\alpha\psi+3\alpha-6\psi-2)c\\+(6\gamma-6\alpha+6\beta-32\beta\gamma+12\psi-1)ab+
 (24\alpha\gamma-6\alpha+2\beta+6\gamma-12\psi-1)bc\\+(24\alpha\beta-6\alpha+6\beta+2\gamma-12\psi-1)ac\\+(8\gamma-12\alpha-4\beta+16\psi-4)a(bc)
+(8\beta-12\alpha-4\gamma+16\psi-4)b(ac)\bigr).
\end{tabu} \\ \hline
\end{tabu}$}
\caption{Table of products for $\eta=1$}\label{t:prod}
\endgroup
\end{center}
\end{table}

\begin{table}
\begin{center}
\begingroup
\setlength{\tabcolsep}{20pt}
\renewcommand{\arraystretch}{1.5}

{
$\begin{tabu}[h!]{|c||c|c|c|c|c|c|c|c|}
\hline
(,) &  a & b & c & ab & bc & ac & a(bc) & b(ac)  \\ \hline\hline
a &  1 & \alpha & \gamma & \alpha & \psi & \gamma & \psi &
\alpha\gamma+\frac{\beta-\psi}{2}  \\ \hline
b &  & 1 & \beta & \alpha & \beta & \psi & \alpha\beta+\frac{\gamma-\psi}{2} & \psi   \\ \hline
c &  &  & 1 & \psi & \beta & \gamma & \beta\gamma+\frac{\alpha-\psi}{2} & \beta\gamma+\frac{\alpha-\psi}{2} \\ \hline
ab &  &  & & \frac{2\alpha^2-\alpha+1}{2} & \frac{2\alpha\beta+\gamma-\psi}{2} & \frac{2\alpha\gamma+\beta-\psi}{2} & \frac{4\alpha\psi+2\beta+\psi-2\alpha\beta-\gamma}{4} & \frac{4\alpha\psi+2\gamma+\psi-2\alpha\gamma-\beta}{4}  \\ \hline
bc &  & &  &  & \frac{2\beta^2-\beta+1}{2} & \frac{2\beta\gamma+\alpha-\psi}{2} & \frac{(6\beta-1)(\alpha+\gamma)-(4\beta+2)\psi}{4} & \frac{4\beta\psi+2\gamma+\psi-2\beta\gamma-\alpha}{4}   \\ \hline
ac &  &  & &  &  & \frac{2\gamma^2-\gamma+1}{2} & \frac{4\gamma\psi+2\beta+\psi-2\beta\gamma-\alpha}{4} & \frac{(6\gamma-1)(\alpha+\beta)-(4\gamma+2)\psi}{4}  \\ \hline
a(bc) &  &  & &  & &  & \begin{tabu}{@{}c@{}}\frac{4\beta^2-6\beta\gamma-6\alpha\beta+4\beta\psi}{8}\\\frac{+8\psi^2+\alpha-2\beta+\gamma+2\psi+2}{8} \end{tabu}  & \begin{tabu}{@{}c@{}} \frac{8\alpha\beta\gamma+6\alpha^2-6\alpha\psi-2\beta^2}{8} \\ \frac{+6\beta\gamma+2\beta\psi-2\gamma^2+2\gamma\psi}{8} \\ \frac{-4\psi^2-2\alpha+\beta+\gamma-\psi-1}{8}  \end{tabu} \\ \hline
b(ac) &  &  & &  & & & &  \begin{tabu}{@{}c@{}}
\frac{4\gamma^2-6\alpha\gamma-6\beta\gamma+4\gamma\psi}{8} \\
\frac{+8\psi^2+\alpha+\beta-2\gamma+2\psi+2}{8} \end{tabu}\\ \hline
\end{tabu}$}
\caption{the Gram matrix}\label{t:gram}
\endgroup
\end{center}
\end{table}

\begin{table}
\begin{center}
\begingroup
\setlength{\tabcolsep}{20pt}
\renewcommand{\arraystretch}{1.3}
{\small
$\begin{tabu}[h!]{|c|}
\hline
a\cdot a = a, b\cdot b=b, c\cdot c=c,
a\cdot b = ab, a\cdot c=ac, b\cdot c=bc, \\ \hline
a\cdot bc = a(bc), b\cdot ac = b(ac), c\cdot ab=(\eta+1)(ab+bc+ac)-\eta(a+b+c)-a(bc)-b(ac), \\ \hline
\begin{tabu}{@{}c@{}}
a\cdot ab = \frac{1}{2}((1-\eta)a-\eta b + (1+2\eta)ab),
a\cdot ac = \frac{1}{2}((1-\eta)a-\eta c + (1+2\eta)ac), \\
b\cdot ab = \frac{1}{2}((1-\eta)b-\eta a + (1+2\eta)ab),
b\cdot bc = \frac{1}{2}((1-\eta)b-\eta c + (1+2\eta)bc), \\
c\cdot ac = \frac{1}{2}((1-\eta)c-\eta a + (1+2\eta)ac),
c\cdot bc = \frac{1}{2}((1-\eta)c-\eta b + (1+2\eta)bc),
\end{tabu}\\ \hline
a\cdot a(bc)=\frac{1}{2}((1-\eta)a-\eta bc + (1+2\eta)a(bc)),
b\cdot b(ac)=\frac{1}{2}((1-\eta)b-\eta ac + (1+2\eta)b(ac)), \\ \hline
\begin{tabu}{@{}c@{}}
b\cdot a(bc)=\frac{1}{4}(-\eta a+ (1-2\eta^2)b+ (\eta-2\eta^2)c+2\eta ab+ (4\eta^2-2\eta)bc-ac+2a(bc)+2b(ac)), \\
c\cdot a(bc)=\frac{1}{4}(-3\eta a-(2\eta^2+2\eta-1)c+ (2\eta+1)(ab+2ac-\eta b)+(4\eta^2+2)bc-2b(ac) ), \\
a\cdot b(ac)=\frac{1}{4}( (1-2\eta^2)a-\eta b+(\eta-2\eta^2)c+2\eta ab -bc +(4\eta^2-2\eta)ac+2a(bc)+2b(ac)), \\
c\cdot b(ac)=\frac{1}{4}(-3\eta b-(2\eta^2+2\eta-1)c+ (2\eta+1)(ab+2bc-\eta a)+(4\eta^2+2)ac-2a(bc) ),
\end{tabu} \\ \hline
\begin{tabu}{@{}c@{}}
(ab)^2=\frac{1}{4}((1-4\eta)(a+b)+(8\eta+2)ab),
(bc)^2=\frac{1}{4}((1-4\eta)(b+c)+(8\eta+2)bc), \\
(ac)^2=\frac{1}{4}((1-4\eta)(a+c)+(8\eta+2)ac), \\
ab\cdot bc=\frac{1}{4}( -3\eta(a+c)+(1-4\eta)b+(2\eta+1)(2ab+2bc+ac)-2b(ac)), \\
ab\cdot ac=\frac{1}{4}((1-4\eta)a-3\eta(b+c)+(2\eta+1)(2ab+bc+2ac)-2a(bc)), \\
bc\cdot ac=\frac{1}{4}(-\eta(a+b)+(1-2\eta)c-ab+2\eta(bc+ac)+ 2a(bc)+2b(ac)),
\end{tabu} \\ \hline
\begin{tabu}{@{}c@{}}
ab\cdot a(bc)=\frac{1}{8}( (2-8\eta)a-(2\eta^2+5\eta-1)b-(2\eta^2+\eta)c+ (10\eta+2)ab\\+(4\eta^2-2\eta+1)bc+2\eta ac+(4\eta+2)a(bc)), \\
bc\cdot a(bc)=\frac{1}{8}(-4\eta a-(4\eta^2+3\eta-1)(b+c)+(4\eta-1)(ab+ac)\\+(8\eta^2+2\eta+2)bc+6a(bc)), \\
ac\cdot a(bc)=\frac{1}{8}( (2-8\eta)a-(2\eta^2+\eta)b-(2\eta^2+5\eta-1)c + 2\eta ab\\+(4\eta^2-2\eta+1)bc+(10\eta+2)ac+(4\eta+2)a(bc)), \\
ab\cdot b(ac)=\frac{1}{8}((1-5\eta-2\eta^2)a+(2-8\eta)b-(2\eta^2+\eta)c+ (10\eta+2)ab\\+2\eta bc+(4\eta^2-2\eta+1)ac+(4\eta+2)b(ac)), \\
bc\cdot b(ac)=\frac{1}{8}(-(2\eta^2+\eta)a+(2-8\eta)b+(1-5\eta-2\eta^2)c+2\eta ab\\+(10\eta+2)bc+(4\eta^2-2\eta+1)ac+(4\eta+2)b(ac)), \\
ac\cdot b(ac)=\frac{1}{8}((1-3\eta-4\eta^2)(a+c)-4\eta b+ (4\eta-1)(ab+bc)\\+(8\eta^2+2\eta+2)ac+6b(ac)),
\end{tabu} \\ \hline
\begin{tabu}{@{}c@{}}
(a(bc))^2=\frac{1}{16}( (4-16\eta)a+(1-2\eta-8\eta^2)(b+c)+(8\eta-2)(ab+ac+(2\eta-1)bc)\\+(16\eta+12)a(bc) ), \\
(b(ac))^2=\frac{1}{16}( (1-2\eta-8\eta^2)(a+c)+(4-16\eta)b+ (8\eta-2)(ab+bc+(2\eta-1)ac)\\+(16\eta+12)b(ac) ), \\
a(bc)\cdot b(ac)=\frac{1}{16}( (2-8\eta-6\eta^2)(a+b)+(1-12\eta^2)c + (12\eta-3)ab\\+(12\eta^2-2\eta-1)(bc+ac)+(4\eta+8)(a(bc)+b(ac)) ).
\end{tabu} \\ \hline
\end{tabu}$}
\caption{Table of products for $\eta\neq-1$}\label{t:prod2}
\endgroup
\end{center}
\end{table}

In what follows, we assume that $A$ is generated by primitive axes $a$, $b$, and $c$.  Denote by $\mathcal{B}$ the
set $\{a, b, c, ab, ac, bc, a(bc), b(ac)\}$.
We also use the following notation for values of the
form: $\alpha=(a, b)$, $\beta=(b, c)$, $\gamma=(a, c)$, and $\psi= (a,bc)=(b,ac)=(c,ab)$.

\begin{prop}\label{p:3gen} The algebra $A$ is the span of $\mathcal{B}$.
The pairwise products of elements from $\mathcal{B}$ can be found according to Table~\ref{t:prod} and Table~\ref{t:prod2} depending on $\eta$. Moreover, if $\eta=-1$, then the Gram matrix for elements of $\mathcal{B}$ is as in Table~\ref{t:gram} and its determinant is equal to
\begin{multline*}
\frac{3}{2^9}(\alpha+\beta+\gamma-2\psi-1)^4\\\times(12\alpha\beta\gamma-2\alpha^2-2\beta^2-2\gamma^2-2\psi^2+2\alpha\beta+2\beta\gamma+2\alpha\gamma-4\alpha\psi-4\beta\psi-4\gamma\psi+\alpha+\beta+\gamma-2\psi+1)^3.
\end{multline*}

\end{prop}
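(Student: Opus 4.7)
The plan is to establish the spanning claim first, then fill in the product tables using the machinery of Sections~2--3, and finally, for $\eta=-1$, assemble the Gram matrix and compute its determinant. For the spanning claim it suffices to show that $\langle\mathcal{B}\rangle$ is closed under multiplication, since $A$ is generated by $a$, $b$, $c\in\mathcal{B}$. The only degree-$3$ monomial in the generators not already placed in $\mathcal{B}$ is $c(ab)$: applying Corollary~\ref{c:3prod} with primitive axis $a$ and arguments $b$, $c$ expresses $a(bc)+b(ac)+c(ab)$ as a linear combination of $a$, $b$, $c$, $ab$, $ac$, $bc$, and rearrangement puts $c(ab)$ into $\langle\mathcal{B}\rangle$. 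Closure of $\langle\mathcal{B}\rangle$ under multiplication then follows from the product tables themselves, so the spanning assertion and the product tables will be proved together.

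To fill the product tables I would group entries by the tool that produces them. The entries $a\cdot ab$, $a\cdot ac$, $a\cdot a(bc)$ and their $b$- and $c$-fixed analogues come directly from Lemma~\ref{l:a(ax)}; the squares $(ab)^2$, $(ac)^2$, $(bc)^2$ come from Proposition~\ref{p:2gen} applied inside the relevant $2$-generated subalgebra. Products of the form $(au)(av)$ with $u,v\in\{b,c,bc\}$ --- in particular $ab\cdot ac$, $ab\cdot a(bc)$, $ac\cdot a(bc)$, $(a(bc))^2$ and their $b$-fixed analogues --- come from Lemma~\ref{l:axay1} when $\eta=-1$ and from Lemma~\ref{l:axay2} otherwise. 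The remaining entries, such as $b\cdot a(bc)$, $c\cdot a(bc)$, $bc\cdot a(bc)$, $ab\cdot bc$ and $a(bc)\cdot b(ac)$, are produced by another application of Corollary~\ref{c:3prod} with a suitably chosen primitive axis and a rearrangement to isolate the target product. Where several routes exist I would cross-check, since each agreement encodes a polynomial identity in the parameters and serves as a built-in sanity check. Before any of this, one must justify the single symbol $\psi$ by verifying the symmetries $(a,bc)=(b,ac)=(c,ab)$; this is immediate from the Frobenius property of Proposition~\ref{p:form}. Note further that for $\eta\neq-1$, Proposition~\ref{p:train}(i) forces $\alpha=\beta=\gamma=1$ and then $\psi=w(a)w(bc)=1$, so the entries of Table~\ref{t:prod2} in fact depend on $\eta$ alone.

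For $\eta=-1$ the Gram matrix is assembled from the product tables via the Frobenius property: for instance $(ab,a(bc))=(a,b\cdot a(bc))$, and $b\cdot a(bc)$ is a known element of $\langle\mathcal{B}\rangle$ whose inner products against $a$ reduce, via one further application of Frobenius together with Lemma~\ref{l:b_eta-c_eta}, to polynomials in $\alpha,\beta,\gamma,\psi$. Once the $8\times 8$ Gram matrix is in hand, the claimed determinant is a purely symbolic computation, most efficiently verified in a computer algebra system. The main obstacle throughout is bookkeeping rather than ideas: several entries can be reached by more than one route, the tables close only through nontrivial polynomial identities, and coaxing the final determinant into the advertised factored form --- the fourth power of a linear form times the cube of a cubic form in $\alpha,\beta,\gamma,\psi$ --- is where essentially all of the remaining work lies.
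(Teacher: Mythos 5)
Your proposal is correct and follows the same overall architecture as the paper's proof: establish spanning by closing $\langle\mathcal{B}\rangle$ under multiplication, get $c(ab)$ from Corollary~\ref{c:3prod}, get the products of the form $(au)(av)$ from Lemmas~\ref{l:axay1} and~\ref{l:axay2}, and verify the table entries and the Gram determinant symbolically in a computer algebra system. The only genuine divergence is in how the hardest entries are isolated. For $b\cdot a(bc)$, $c\cdot a(bc)$, $a\cdot b(ac)$, $c\cdot b(ac)$ the paper does not re-apply Corollary~\ref{c:3prod}; it splits, e.g., $a(b(ac))=\tfrac12\,a\bigl(b(ac)+c(ab)\bigr)+\tfrac12\,a\bigl(b(ac)-c(ab)\bigr)$, handles the first summand via the known expression for $c(ab)$, and notes that $b(ac)-c(ab)$ has zero $A_1(a)$-component, so its image under $ad_a$ is read off from the $\eta$- and $\tfrac12$-eigenvector decomposition, with the $\eta$-part a combination of $b_\eta$ and $c_\eta$ and hence of $\mathcal{B}$. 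For $a(bc)\cdot b(ac)$ the paper squares the identity $a(bc)+b(ac)=\beta a+\gamma b+\alpha c-c(ab)$ rather than invoking Corollary~\ref{c:3prod} again. Your alternative of iterating Corollary~\ref{c:3prod} with varying axes does work (e.g., axis $b$ with $x=a$, $y=bc$ isolates $b\cdot a(bc)$ once $(ab)(bc)$ is known from Lemma~\ref{l:axay1} applied at $b$), but it requires ordering the computations so that every auxiliary product encountered is already in hand --- exactly the bookkeeping you acknowledge; the paper's even/odd splitting avoids that dependency chain. Two small corrections: $ab\cdot bc=(ba)(bc)$ belongs to your Lemma~\ref{l:axay1}/\ref{l:axay2} group at the axis $b$, not to the Corollary~\ref{c:3prod} group; and your observation that $\alpha=\beta=\gamma=\psi=1$ when $\eta\neq-1$ (so Table~\ref{t:prod2} depends on $\eta$ alone) is correct and consistent with the paper, which imposes the same normalization in Corollary~\ref{cor:minus-one}.
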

\begin{proof}
We have several obvious products: $aa,bb,cc,ab,bc,ac, a(bc), b(ac)\in\mathcal{B}$. By Corollary~\ref{c:3prod},
$c(ab)$ lies in the span of $\mathcal{B}$.
Products $(ab)(ac)$, $(ab)^2$, $(ab)(a(bc))$, $(ab)(b(ac))$, and $(a(bc))^2$
can be found using smaller products with the aid of Lemmas~\ref{l:axay1} and~\ref{l:axay2}. We skip calculations and list final results in Tables~\ref{t:prod} and~\ref{t:prod2}.

Now we show how to find products $b(a(bc))$, $c(a(bc))$,
$a(b(ac))$, and $c(b(ac))$.
First consider $a(b(ac))$.
We use that $a(b(ac))=\frac{1}{2}(a(b(ac)+c(ab)))+\frac{1}{2}(a(b(ac)-c(ab)))$. If $\eta=-1$, then the first summand can be found as follows:
$$a(b(ac)+c(ab))=a(\beta a+\gamma b+\alpha a-a(bc))\in\langle\mathcal{B}\rangle.$$
Similarly, If $\eta\neq-1$, then we use the corresponding expression for $c(ab)$ from Corollary~\ref{c:3prod}.
It remains to find $a(b(ac)-c(ab))$.
Write $b=\alpha a+b_\eta+b_\frac{1}{2}$ and $c=\gamma a+c_\eta+c_\frac{1}{2}$. As in Lemma~\ref{l:3prod}, we see that
$b(ac)-c(ab)=x_\eta+x_\frac{1}{2}$,
where $x_\eta=(\alpha\eta^2-\alpha\eta)c_\eta-(\gamma\eta^2-\gamma\eta)b_\eta$ and
$x_\frac{1}{2}=\frac{\gamma}{4}x_\frac{1}{2}-\frac{\alpha}{4}y_\frac{1}{2}+(\frac{1}{2}-\eta)(b_\eta c_\frac{1}{2}-c_\eta b_\frac{1}{2})$. Clearly, $x_\eta\in A_\eta(a)$ and $x_\frac{1}{2}\in A_\frac{1}{2}(a)$. Then
$a(x_\eta+x_\frac{1}{2})=\eta x_\eta+\frac{1}{2}x_\frac{1}{2}=\frac{1}{2}(x_\eta+x_\frac{1}{2})+(\eta-\frac{1}{2})x_\eta$. Now $x_\eta+x_\frac{1}{2}=b(ac)-c(ab)$ and
$x_\eta$ is a linear combination of $b_\eta$ and $c_\eta$. As in Proposition~\ref{p:2gen},
we see that $$b_{\eta} = \frac{1}{1-2\eta}(\alpha a + b - 2ab), c_{\eta} = \frac{1}{1-2\eta}(\gamma a + c - 2ac).$$
So $a(b(ac)-c(ab))$ belongs to the span of $\mathcal{B}$ and the same is true for $a(b(ac))$.
Similar arguments can be applied to find $b(a(bc))$, $c(a(bc))$, and $c(b(ac))$.

Finally, we need to find the product of $a(bc)$ and $b(ac)$. If $\eta=-1$, then, by Corollary~\ref{c:3prod}, it is true that
$a(bc)+b(ac)=\beta a+\gamma b+\alpha c-c(ab)$, so
$$2(a(bc))(b(ac))=(\beta a+\gamma b+\alpha c)^2-2(\beta a+\gamma b+\alpha c)(c(ab))+(c(ab))^2-(a(bc))^2-(b(ac))^2.$$
The right-hand side of this equality is in $\mathcal{B}$ and can be found explicitly using the previous products. We write the final result into Table~\ref{t:1}. If $\eta\neq-1$, then we find the product in the same way using the expression for $c(ab)$ from Corollary~\ref{c:3prod}.
All calculations can be reproduced in the computer algebra system GAP~\cite{GAP}: we upload all commands for products and the Gram matrix in~\cite{file}.
\end{proof}

Finally, we describe the subspaces $A_\eta(a)$ and $A_\frac{1}{2}(a)$
to complete the proof of Theorem~\ref{th:2}.
\begin{lem}\label{l:eigenspaces}
The following statements hold.
\begin{enumerate}[(i)]
\item $A_{\eta}(a)=\langle \alpha a+b-2ab, \gamma a+c-2ac, \psi a+bc-2a(bc)\rangle$;
\item $
A_\frac{1}{2}(a)=\langle \alpha(\eta-1)a-\eta b+ab, \gamma(\eta-1)a-\eta c+ac, \psi(\eta-1)a-\eta bc+a(bc)$,\\
$(2\eta^2-1)(2\psi-\beta)a-\eta\gamma b+(\eta-2\eta^2)\alpha c-bc+2b(ac)\rangle.$
\end{enumerate}
\end{lem}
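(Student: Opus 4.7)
The plan is to verify each part by a double inclusion, relying on the projection formulas implicit in the proof of Proposition~\ref{p:2gen}: for every $x \in A$,
\[
(1-2\eta)\,\pi_\eta(x) = (a,x)\,a + x - 2ax, \qquad \tfrac{1-2\eta}{2}\,\pi_{1/2}(x) = (a,x)(\eta-1)\,a - \eta x + ax,
\]
where $\pi_\lambda$ denotes projection $A \to A_\lambda(a)$ along $A = A_1(a) \oplus A_\eta(a) \oplus A_{1/2}(a)$ (valid since $1-2\eta \neq 0$). Specializing $x$ to $b$, $c$, and $bc$ and using $(a,b)=\alpha$, $(a,c)=\gamma$, $(a,bc)=\psi$ from Proposition~\ref{p:form}, one identifies the first three listed generators of (i) as $(1-2\eta)\pi_\eta(b)$, $(1-2\eta)\pi_\eta(c)$, $(1-2\eta)\pi_\eta(bc)$, and the first three of (ii) as $\tfrac{1-2\eta}{2}\pi_{1/2}(b)$, $\tfrac{1-2\eta}{2}\pi_{1/2}(c)$, $\tfrac{1-2\eta}{2}\pi_{1/2}(bc)$, which places them in the appropriate eigenspaces.

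For the reverse inclusions, Proposition~\ref{p:3gen} gives $A = \operatorname{span}\mathcal{B}$ with $\mathcal{B} = \{a,b,c,ab,ac,bc,a(bc),b(ac)\}$, so each eigenspace is the span of the projections of the elements of $\mathcal{B}$. Seven of these projections lie in the claimed spans for free: $\pi_\lambda(a) = 0$; for $y \in \{b,c,bc\}$ the projections $\pi_\lambda(y)$ are already included, and $\pi_\eta(ay) = \eta\,\pi_\eta(y)$, $\pi_{1/2}(ay) = \tfrac12\,\pi_{1/2}(y)$ by the definition of the eigenspace decomposition. The only remaining task is to control $\pi_\eta(b(ac))$ and $\pi_{1/2}(b(ac))$.

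For this, compute $(a,b(ac)) = (ab,ac) = \tfrac12\bigl((1-\eta)\alpha\gamma - \eta\beta + (1+2\eta)\psi\bigr)$ using the Frobenius property and Lemma~\ref{l:a(ax)} applied to the pair $(a(ab),c)$, and read off $a \cdot b(ac)$ from Table~\ref{t:prod} when $\eta = -1$ or Table~\ref{t:prod2} when $\eta \neq -1$. Substituting into the projection formulas yields explicit linear combinations of the elements of $\mathcal{B}$ for $\pi_\eta(b(ac))$ and $\pi_{1/2}(b(ac))$. A direct expansion then shows, on the one hand, that $\pi_\eta(b(ac))$ lies in the span of $(1-2\eta)\pi_\eta(b), (1-2\eta)\pi_\eta(c), (1-2\eta)\pi_\eta(bc)$, completing (i); and on the other hand, that the fourth listed generator $w$ of (ii) equals $2\pi_{1/2}(b(ac))$ modulo the span of the first three generators of (ii), which simultaneously confirms $w \in A_{1/2}(a)$ and places $\pi_{1/2}(b(ac))$ inside the claimed span, finishing (ii). The principal obstacle is the length of this final verification: one must track the coefficients of all eight elements of $\mathcal{B}$ through the substitution, handling the two cases $\eta = -1$ and $\eta \neq -1$ separately. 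The computation is entirely linear and routine, and is most comfortably carried out in a computer algebra system, parallel to the checks already appealed to in the proof of Proposition~\ref{p:3gen}.
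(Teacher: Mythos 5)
Your proof is correct, and its second half takes a genuinely different route from the paper's. Both arguments start the same way: the first three generators in each part are identified, via the decomposition $x=(a,x)a+x_\eta+x_{1/2}$ from Propositions~\ref{p:2gen} and~\ref{p:form}, as $(1-2\eta)\pi_\eta(x)$ and $\tfrac{1-2\eta}{2}\pi_{1/2}(x)$ for $x\in\{b,c,bc\}$, where $\pi_\lambda$ is the projection onto $A_\lambda(a)$. For the reverse inclusion the paper writes the seven listed vectors together with $a$ as an $8\times 8$ coefficient matrix over the spanning set $\mathcal{B}$, checks in GAP that its determinant $16(\eta-\tfrac12)^3$ is nonzero, and concludes that the eight eigenvectors span $A$; the membership of the fourth vector of (ii) in $A_{1/2}(a)$ is verified separately from the tables. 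You instead use $A_\lambda(a)=\pi_\lambda(\operatorname{span}\mathcal{B})$, note that the projections of $a$, $ab$, $ac$, $a(bc)$ give nothing beyond those of $b$, $c$, $bc$, and reduce the whole question to the two projections of $b(ac)$; matching these against the listed generators settles the spanning claim and, as a byproduct, the eigenvector property of the fourth generator of (ii). Your version is conceptually leaner (only one element of $\mathcal{B}$ needs attention, and one verification is absorbed into another), while the paper's determinant check is a single uniform computation. One point to watch when you carry out the expansion: after matching the coefficients of $b$, $c$, $ab$, $ac$, $bc$, $a(bc)$, $b(ac)$, the residual discrepancy sits on $a$ alone; it vanishes either because it is an element of $A_\eta(a)\cap A_1(a)=0$ (respectively $A_{1/2}(a)\cap A_1(a)=0$), or, for $\eta\neq-1$, after substituting $\alpha=\beta=\gamma=\psi=1$ from Proposition~\ref{p:train}, which Table~\ref{t:prod2} already assumes.
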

\begin{proof}
Write $b=\alpha a+b_\eta+b_\frac{1}{2}$, where $b_\eta\in A_\eta(a)$ and $b_\frac{1}{2}\in A_\frac{1}{2}(a)$. As in Proposition~\ref{p:2gen},
we see that $\alpha a+b-2ab=(1-2\eta)b_\eta\in A_\eta(a)$ and
$\alpha(\eta-1)a-\eta b+ab=(1-2\eta)b_\frac{1}{2}\in A_\frac{1}{2}(a)$. Similarly, we see that $\gamma a+c-2ac, \psi a+bc-2a(bc)\in A_\eta(a)$ and
$\gamma(\eta-1)a-\eta c+ac, \psi(\eta-1)a-\eta bc+a(bc)\in A_\frac{1}{2}(a)$.
It is straightforward calculation with the aid of Tables~\ref{t:prod} and~\ref{t:prod2} that
$(2\eta^2-1)(2\psi-\beta)a-\eta\gamma b+(\eta-2\eta^2)\alpha c-bc+2b(ac)\in A_\frac{1}{2}(a)$.

Now write coefficients of these seven vectors together with $a$, which spans $A_1(a)$, with respect to $a,b,c,ab,bc,ac,a(bc),b(ac)$ into $8\times 8$ matrix. Using GAP,
we find that the determinant of this matrix equals
$16(\eta-\frac{1}{2})^3$. Then Proposition~\ref{p:3gen} implies that these eight eigenvectors of $ad_a$ span $A$ and hence the corresponding vectors span $A_\eta(a)$ and $A_\frac{1}{2}(a)$.
\end{proof}

We conclude this section with the following statement which is a consequence of Proposition~\ref{p:3gen} and Lemma~\ref{l:eigenspaces}.
\begin{cor}\label{cor:minus-one}
Fix a field $\mathbb{F}$ of characteristic not $2$ or $3$. Take arbitrary $\eta\in\mathbb{F}\setminus\{1,\frac{1}{2}\} $. If $\eta=-1$, then choose
arbitrary values of the parameters $\alpha,\beta,\gamma,\psi\in\mathbb{F}$. If $\eta\neq-1$, then set $\alpha=\beta=\gamma=\psi=1$.
Assume that $e_1, e_2,\ldots,e_8$ is a basis of an
$8$-dimensional vector space $V$ over $\mathbb{F}$. Define products on these elements as in Tables~\ref{t:prod} and~\ref{t:prod2}, where
we identify elements of the basis with $a$, $b$, $c$, $ab$, $bc$, $ac$, $a(cb)$, $b(ac)$, $c(ab)$ in these tables. Then $V$ with the thus-defined product is a $\mathcal{PC}(\eta)$-axial algebra generated
by the three primitive axes $e_1$, $e_2$, and $e_3$.

Moreover, suppose that $A$ is a $\mathcal{PC}(\eta)$-axial algebra
endowed with the Frobenius form as in Proposition~\ref{p:form} and generated by three primitive axes $x$, $y$, and $z$.
If $(x, y)=\alpha$, $(y, z)=\beta$, $(x,z)=\gamma$, and $\psi=(x, yz)$, then $A$ is a homomorphic image of $V$.
\end{cor}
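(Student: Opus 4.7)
The plan is to treat the two assertions separately but with a shared computational backbone. For the first claim, I would start by noting that the tables explicitly give a commutative multiplication on $V$ (each entry is symmetric in its two arguments), and that $e_1,e_2,e_3$ are immediately idempotent from the diagonal entries $a\cdot a$, $b\cdot b$, $c\cdot c$. So the only nontrivial task is to exhibit the Peirce decomposition of $\operatorname{ad}_{e_1}$ and verify the $\mathcal{PC}(\eta)$ fusion law.

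For the eigenspace decomposition I would take the candidate basis supplied by Lemma~\ref{l:eigenspaces}: $A_1(e_1)=\langle e_1\rangle$, together with the three explicit generators of $A_\eta(e_1)$ and the four explicit generators of $A_{1/2}(e_1)$ written in terms of $e_1,\ldots,e_8$. One checks by direct multiplication using Tables~\ref{t:prod} and~\ref{t:prod2} that each of these vectors is an eigenvector with the claimed eigenvalue. The determinant computation already performed in the proof of Lemma~\ref{l:eigenspaces} (equal to $16(\eta-\tfrac{1}{2})^3$) shows that the eight vectors are linearly independent, hence form a basis of $V$, so $V=V_1\oplus V_\eta\oplus V_{1/2}$ and primitivity ($\dim V_1=1$) follows automatically.

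The fusion law $V_\lambda V_\mu\subseteq V_{\lambda\ast\mu}$ is then checked by multiplying each pair of the eight basis eigenvectors and expressing the result back in the eigenbasis. This is the step that is expected to be the main obstacle: it is a routine but bulky calculation, and the authors handle it by delegating the arithmetic to GAP via the script referenced in the paper. Once the law is verified for $e_1$, the $S_3$-symmetry of Tables~\ref{t:prod} and~\ref{t:prod2} under the simultaneous permutations $a\leftrightarrow b\leftrightarrow c$, $\alpha\leftrightarrow\beta\leftrightarrow\gamma$ (with $\psi$ fixed) transports the verification to $e_2$ and $e_3$ without redoing any work; so $A(\alpha,\beta,\gamma,\psi)$ is a primitive $\mathcal{PC}(\eta)$-axial algebra generated by $e_1,e_2,e_3$.

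For the second assertion, given $A$ generated by primitive axes $x,y,z$ with the prescribed Frobenius values, Proposition~\ref{p:3gen} says that $A$ is spanned by $x,y,z,xy,yz,xz,x(yz),y(xz)$ and that the pairwise products of these eight elements are given by exactly the entries of Tables~\ref{t:prod} and~\ref{t:prod2} after substituting the parameters $\alpha,\beta,\gamma,\psi$. Define $\phi:A(\alpha,\beta,\gamma,\psi)\to A$ as the $\mathbb{F}$-linear map sending $e_1,\ldots,e_8$ to $x,y,z,xy,yz,xz,x(yz),y(xz)$ respectively. It is surjective because its image contains a spanning set of $A$. It preserves products because the structure constants on the eight basis elements coincide on both sides by Proposition~\ref{p:3gen}. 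Hence $\phi$ is a surjective algebra homomorphism and $A$ is a homomorphic image of $A(\alpha,\beta,\gamma,\psi)$, finishing the corollary.
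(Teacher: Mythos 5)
Your proposal is correct and follows essentially the same route as the paper, which states the corollary without a written proof as a consequence of Proposition~\ref{p:3gen} and Lemma~\ref{l:eigenspaces} and delegates the underlying arithmetic to the GAP scripts; you in fact supply more detail than the authors do. The only small caveat is that the $S_3$-transport of the fusion-law check to $e_3$ is not entirely free, because $c(ab)$ is not a basis vector, so the cyclic permutation of $a,b,c$ must first be verified to extend to an algebra automorphism (or one simply repeats the eigenbasis and fusion computation for $e_2$ and $e_3$ in GAP), but this does not affect the correctness of the argument.
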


\Addresses

\end{document}